\documentclass{article}

%\usepackage{lineno,hyperref}
%\modulolinenumbers[5]

\usepackage{floatrow}
\usepackage{graphicx,subcaption}
\usepackage{amsmath, amsfonts, amsthm, amssymb, bm}
\usepackage{color}
\usepackage{tabularx,hhline}
\usepackage{multirow}
\usepackage{booktabs}
\usepackage{fullpage}[2cm]
\usepackage{algorithm}
\usepackage{algorithmic}
\usepackage{enumerate}
\usepackage{paralist}
\usepackage{cancel}
\usepackage{tikz}
\usepackage{xspace}
\usetikzlibrary{patterns}
\usepackage{listings}
\usepackage[normalem]{ulem}

\usepackage[margin = 1.5cm, font = small, labelfont = bf, labelsep = period]{caption}
\usepackage[font = small, labelfont = bf, labelsep = period]{caption}
\newfloatcommand{capbtabbox}{table}[][\FBwidth]

\linespread{1.5}
\newcommand{\subjectto}{\textnormal{subject to}}
\newcommand{\st}{\textnormal{s.t.}}

\newcommand{\cone}{\textnormal{cone}}
\newcommand{\vol}{\textnormal{Vol}}

\newcommand{\minimize}{\textnormal{minimize}}
\newcommand{\interior}{\textnormal{int}}
\newcommand{\norm}[1]{\left\lVert#1\right\rVert}

\DeclareMathOperator*{\Diag}{Diag}

\newcommand{\eg}{e.g.}
\newcommand{\ie}{i.e.}
\newcommand{\PP}{\mathbb{P}}
\newcommand{\QQ}{\mathbb{Q}}
\newcommand{\EE}{\mathbb{E}}
\newcommand{\RR}{\mathbb{R}}

\newcommand{\Sbb}{\mathbb{S}}
\newcommand{\Pc}{\mathcal{P}}
\newcommand{\Qc}{\mathcal{Q}}
\newcommand{\Ib}{\mathbb{I}}

\newcommand{\gap}{\vspace{1mm}}

\newcommand{\Emve}{\E_{\textnormal{mve}}}
\newcommand{\Esmvie}{\E_{\textnormal{smvie}}}
\newcommand{\Ecop}{\E_{\textnormal{sdp}}}
\newcommand{\Esproc}{\E_{\textnormal{sproc}}}
\newcommand{\Ektt}{\E_{\textnormal{ktt}}}
\newcommand{\Ezrh}{\E_{\textnormal{zrh}}}

\newcommand{\sproc}{$\mathcal S$-procedure\xspace}

\newcommand{\zab}{Z(\bm A, \bm b)}

%\usepackage{bickham}
%        \usepackage{boondox-cal}
%\usepackage{boondox-calo}
%\usepackage{dutchcal}

%\DeclareMathAlphabet{\mathscr}{U}{BOONDOX-calo}{m}{n}
%\SetMathAlphabet{\mathscr}{bold}{U}{BOONDOX-calo}{b}{n}
%\DeclareMathAlphabet{\mathbscr} {U}{BOONDOX-calo}{b}{n}

%\DeclareMathAlphabet{\mathscr}{U}{dutchcal}{m}{n}
%\SetMathAlphabet{\mathscr}{bold}{U}{dutchcal}{b}{n}
%\DeclareMathAlphabet{\mathbscr} {U}{dutchcal}{b}{n}

\newcommand{\Sb}{{\bm{\mathcal S}}}
\newcommand{\C}{\mathcal{C}}
\newcommand{\X}{\mathcal{X}}

\newcommand{\E}{\mathcal{E}}
\newcommand{\R}{\mathcal{R}}
\newcommand{\K}{\mathcal{K}}

\newcommand{\ee}{\mathbf e}

\newcommand{\etal}{et al.}

\newcommand{\tr}{\textup{tr}}

\newtheorem{theorem}{Theorem}
\newtheorem{proposition}{Proposition}
\newtheorem{lemma}{Lemma}

\newtheorem{corollary}{Corollary}
\newtheorem{example}{Example}
\newtheorem{remark}{Remark}

\newtheorem{assume}{Assumption}

%%%%%%%%%%%%%%%%%%%%%%%
%% Elsevier bibliography styles
%%%%%%%%%%%%%%%%%%%%%%%
%% To change the style, put a % in front of the second line of the current style and
%% remove the % from the second line of the style you would like to use.
%%%%%%%%%%%%%%%%%%%%%%%

%% Numbered
%\bibliographystyle{model1-num-names}

%% Numbered without titles
%\bibliographystyle{model1a-num-names}

%% Harvard
%\bibliographystyle{model2-names.bst}\biboptions{authoryear}

%% Vancouver numbered
%\usepackage{numcompress}\bibliographystyle{model3-num-names}

%% Vancouver name/year
%\usepackage{numcompress}\bibliographystyle{model4-names}\biboptions{authoryear}

%% APA style
%\bibliographystyle{model5-names}\biboptions{authoryear}

%% AMA style
%\usepackage{numcompress}\bibliographystyle{model6-num-names}

%% `Elsevier LaTeX' style
%\bibliographystyle{elsarticle-num}
%%%%%%%%%%%%%%%%%%%%%%%

\begin{document}

%\title{Elsevier \LaTeX\ template\tnoteref{mytitlenote}}
%\tnotetext[mytitlenote]{Fully documented templates are available in the elsarticle package on \href{http://www.ctan.org/tex-archive/macros/latex/contrib/elsarticle}{CTAN}.}
%
%%% Group authors per affiliation:
%\author{Elsevier\fnref{myfootnote}}
%\address{Radarweg 29, Amsterdam}
%\fntext[myfootnote]{Since 1880.}
%
%%% or include affiliations in footnotes:
%\author[mymainaddress,mysecondaryaddress]{Elsevier Inc}
%\ead[url]{www.elsevier.com}
%
%\author[mysecondaryaddress]{Global Customer Service\corref{mycorrespondingauthor}}
%\cortext[mycorrespondingauthor]{Corresponding author}
%\ead{support@elsevier.com}
%
%\address[mymainaddress]{1600 John F Kennedy Boulevard, Philadelphia}
%\address[mysecondaryaddress]{360 Park Avenue South, New York}

\title{Finding Minimum Volume Circumscribing Ellipsoids Using Generalized Copositive Programming}
\author{Areesh Mittal and Grani A.~Hanasusanto \\ Graduate Program in Operations Research and Industrial Engineering\\ The University of Texas at Austin, USA}
\date{ }
\maketitle

\begin{abstract}
We study the problem of finding the L\"{o}wner-John ellipsoid, \ie, an ellipsoid with minimum volume that contains a given convex set. We reformulate the problem as a generalized copositive program, and use that reformulation to derive tractable semidefinite programming approximations for instances where the set is defined by affine and quadratic inequalities. We prove that, when the underlying set is a polytope, our method never provides an ellipsoid of higher volume than the one obtained by scaling the maximum volume inscribed ellipsoid.  We empirically demonstrate that our proposed method generates high-quality solutions faster than solving the problem to optimality. Furthermore, we outperform the existing approximation schemes in terms of solution time and quality. 
%We also demonstrate that the difference in the solutions provided by the two methods can be very significant. 
We present applications of our method to obtain piecewise-linear decision rule approximations for dynamic distributionally robust problems with random recourse, and to generate ellipsoidal approximations for the set of reachable states in a linear dynamical system when the set of allowed controls is a polytope.
\end{abstract}

%\begin{keyword}
%minimum volume ellipsoid problem\sep copositive programming \sep semidefinite programming
%\MSC[2010] 00-01\sep  99-00
%\end{keyword}
\section{Introduction}

We consider the \emph{minimum volume ellipsoid problem} (MVEP), which can be stated as follows  \cite{boyd2004convex, todd2016minimum}: ``Given a set $\Pc \subset \RR^K$, find an ellipsoid $\Emve$ with minimum volume that contains $\Pc$.'' In this paper, we focus on sets $\Pc$ that satisfy the following assumption.
\begin{assume}\label{assum:set}
	The set $\Pc$ is compact, convex, and full-dimensional.
\end{assume}
\noindent Compactness guarantees the existence of a bounding ellipsoid. The convexity assumption is made without loss of generality; if the set is not convex, then we can instead consider its convex hull without affecting $\Emve$. If $\Pc$ is not full-dimensional, then the ellipsoid $\Emve$ is degenerate with zero volume. For sets $\Pc$ satisfying Assumption \ref{assum:set}, such an ellipsoid, also known as the \emph{L\"{o}wner-John ellipsoid}, is unique and affine-invariant, making it an attractive outer approximation of $\Pc$ \cite[Section 8.4.1]{boyd2004convex}. The MVEP arises in many applications studied in the literature. Several authors discuss outer ellipsoidal approximations for the set of reachable points in control systems \cite{kurzhanskiui1997ellipsoidal, calafiore2004ellipsoidal}, as it is easier to check whether a point lies in an ellipsoid than in the comparatively complicated reachable set. Rimon and Boyd \cite{rimon1997obstacle} advocate the use of $\Emve$ for collision detection in robotics. Here, one checks whether the ellipsoids intersect as opposed to the sets that they approximate. Other applications of the MVEP include outlier detection \cite{ahipacsaouglu2015fast,silverman1980minimum}, pattern recognition \cite{glineur1998pattern}, computer graphics \cite{eberly20013d}, and facility location \cite{elzinga1974minimum}.  We refer the reader to \cite{henk2012lowner} for an excellent article about the lives of the eponymous researchers Karel L\"{o}wner and Fritz John, the history of the MVEP which dates back to late 1930s, and some important properties of L\"{o}wner-John ellipsoids.

For some sets $\Pc$, it is possible to identify $\Emve$ in polynomial time. For example, if $\Pc$ is defined as the convex hull of a finite number of points, then the complexity of finding $\Emve$ is polynomial in the problem size \cite{khachiyan1996rounding,sun2004computation}. When $\Pc$ is a union of ellipsoids, one can employ the $\mathcal S$-lemma to compute $\Emve$ in polynomial time~\cite{yildirim2006minimum}. However, excluding these special cases, finding $\Emve$ is, in general, a difficult problem. For example, if $\Pc$ is a polytope defined by affine inequalities, or if $\Pc$ is an intersection of ellipsoids, then finding $\Emve$ is NP-hard \cite{boyd2004convex,freund1985complexity}.

Gotoh and Konno \cite{gotoh2006minimal} present a constraint-generation approach to solve the MVEP exactly when $\Pc$ is a polytope defined by affine inequalities. The method starts with a collection of points contained in $\Pc$ and finds the ellipsoid of minimum volume containing those points. Then feasible points lying outside the current ellipsoid are successively generated, and the ellipsoid is updated to include the new point, until a desired optimality tolerance is reached. However, generating a point that lies in $\Pc$ but outside the current candidate ellipsoid at each iteration is very slow, as it entails solving a non-convex optimization problem. Therefore, this approach is computationally expensive, and one has to resort to approximation schemes.

One popular approximation method for the MVEP is based on identifying and scaling the \emph{maximum volume inscribed ellipsoid} (MVIE), \ie, the ellipsoid with maximum volume contained in $\Pc$. In particular, it is known that scaling the MVIE around its center by a factor of $K$ results in an ellipsoid that contains~$\Pc$, thereby serving as an approximation of $\Emve$~\cite{john2014extremum}. Moreover, the MVIE can be found in polynomial time if $\Pc$ is defined by affine and quadratic inequalities \cite{khachiyan1993complexity}. However, this technique, which we refer to as the SMVIE (scaled MVIE) approach, produces highly suboptimal ellipsoids because of the scaling factor~$K$. Another method for approximating the MVEP utilizes the well-known \sproc. Boyd \etal\ \cite{boyd1994linear} discuss the application of the \sproc to generate approximations for the MVEP when $\Pc$ is either an intersection or a Minkowski sum of ellipsoids. {Finally, in a recent paper, Zhen \etal \ \cite{zhen2017robust} study approximations to uncertain second order cone programs and demonstrate how this framework can be exploited to derive an approximation to the MVEP}. 
%We can use the SMVIE approach to add a redundant quadratic constraint to the definition of $\Pc$, and then use the well-known \emph{\sproc}---also known as the \emph{approximate $\mathcal S$-lemma}---to potentially generate an ellipsoid of lower volume~\cite{boyd1994linear}.

Several authors have identified sufficient conditions under which a convex set contains another convex set. %\sout{Ben-Tal and Nemirovski \cite{ben2002tractable} discuss sufficient conditions which guarantee that a \emph{semidefinite-representable} set contains a hypercube. Helton \etal\ \cite{helton2013matricial} generalize this result by deriving sufficient conditions for a semidefinite-representable set to contain another such set.}
 Helton \etal\ \cite{helton2013matricial} discuss sufficient conditions which guarantee that a \emph{semidefinite-representable} set contains another such set. Kellner \etal\ \cite{kellner2013containment} provide slightly improved sufficient conditions compared to the ones in \cite{helton2013matricial}. Although these articles do not focus on the MVEP specifically, their results can be used to approximate $\Emve$ if $\Pc$ is semidefinite-representable (see Appendix \ref{appendix:ktt}). To the best of our knowledge, there are no results that provide a finite system of constraints that are necessary and sufficient to ensure that an ellipsoid contains another set. This gap in knowledge is our main focus.

In this article, we prove that checking whether an ellipsoid contains $\Pc$ is \emph{equivalent} to solving a finite-dimensional \emph{generalized copositive} (GC) feasibility problem. We use this result to reformulate the MVEP exactly as a GC program. This representation of the MVEP enables us to leverage state-of-the-art approximation schemes available for GC programming problems. In particular, GC programs yield a hierarchy of optimization problems which provide an increasingly tight restriction to the original problem \cite{lasserre2001global,parrilo2000structured,zuluaga2006lmi}. While our exact reformulation holds for any $\Pc$ satisfying Assumption \ref{assum:set}, we focus primarily on developing approximations in the case where $\Pc$ is defined by affine and convex quadratic inequalities. We demonstrate that, for these sets, the resulting approximation can be formulated as a semidefinite program (SDP), which can be solved in polynomial time. Since these SDPs are restrictions of the GC reformulation, they provide a feasible ellipsoid that contains $\Pc$. There has been previous work on developing exact copositive programming reformulations for otherwise difficult problems, and using those reformulations to generate tractable approximations \cite{bomze2012copositive, burer2012copositive, burer2012representing, mittal2019robust, hanasusanto2018conic, NRZ11:mixed01,prasad2018improved}. Our results add to this literature by demonstrating the ability of generalized copositive programs to exactly model the MVEP.

%We present theoretical and empirical comparisons of our method against the other state-of-the-art schemes to approximate the MVEP.\sout{We prove that if $\Pc$ is a polytope, then the volume of the ellipsoid generated by our proposed method never exceeds that of the SMVIE; furthermore, the ratio of the volume of the latter ellipsoid to that of the former can be arbitrarily high. Furthermore, when $\Pc$ is a polytope, we show that the \sproc does not improve upon the SMVIE approach. Therefore, \sproc cannot produce ellipsoids of lower volume than our method. This result is surprising since the \sproc has been successfully applied to generate high-quality approximations for $\Emve$ in cases where $\Pc$ is either an intersection or a Minkowski sum of ellipsoids (see \cite[Section~3.7]{boyd1994linear} and references therein). Through our experiments, we demonstrate that our approach yields near-optimal solutions to the MVEP much faster than solving it to optimality using the constraint-generation approach of \cite{gotoh2006minimal}. We further demonstrate that our proposed method is significantly faster, and provides ellipsoids of lower volume, than those obtained using the sufficient conditions by Kellner \etal\ \cite{kellner2013containment}. }

We demonstrate the utility of our approximations to the MVEP in two applications. First, we consider a two-stage distributionally robust optimization (DRO) problem with \emph{random recourse}. Such a problem is NP-hard even in the absence of random recourse \cite{bertsimas2010models}. Bertsimas and Dunning \cite{bertsimas2016multistage} study a \emph{piecewise static} decision rules approximation for the case of dynamic robust optimization, which leads to a tractable reformulation. Although they do not consider a DRO model, this approach can be extended to such a setting. In contrast, we focus on piecewise \emph{linear} decision (PLD) rules approximation. In the presence of random recourse, finding the optimal PLD rule is NP-hard, although feasible PLD rules can be obtained using the \sproc. Unfortunately, these decision rules are often of poor quality. The effectiveness of the \sproc in finding good PLD rules can be improved by considering an ellipsoid that contains the \emph{support set}, \ie, the set of allowed values for the uncertain parameters. In the context of an inventory management problem, we show that the size of this ellipsoid can have a large effect on the quality of the PLD rules. We also demonstrate that the PLD rules generated using our method significantly outperform the piecewise static decision rules.  Second, we utilize our method to generate high-quality ellipsoidal approximations to the set of reachable states in a linear dynamical system when the \emph{control set}, \ie, the set of allowed controls, is a polytope.
%\emph{Doing so reduces the computational effort of checking whether a state is reachable from solving a linear feasibility problem to checking whether the points lies in the ellipsoid, if one is willing to accept a few false positives.} 
This complements the existing schemes that provide similar approximations when the control set is an ellipsoid \cite{kurzhanskiui1997ellipsoidal, calafiore2004ellipsoidal}.

We summarize the main contributions of the article below.
\begin{enumerate}[1)]
	\item We provide necessary and sufficient finite-dimensional conic inequalities that certify whether an ellipsoid contains a set $\Pc$ satisfying Assumption \ref{assum:set}. We use these conditions to derive a generalized copositive reformulation of the MVEP. 
	\item  When $\Pc$ is defined by affine and convex quadratic inequalities, we derive a tractable SDP restriction to the GC reformulation, which results in a feasible ellipsoid that contains $\Pc$. We prove that the volume of the resulting ellipsoid never exceeds that of the SMVIE approach. To the best of our knowledge, our approximation is the first one to have this property. We further show that the ratio of the volume of the SMVIE to the volume of the ellipsoid generated by our method can be arbitrarily high. {We also prove that both the \sproc \cite[Section~3.7]{boyd1994linear} and the approximation suggested by Zhen \etal\ \cite{zhen2017robust} never generates ellipsoids of lower volumes than the SMVIE approach.}
	\item We demonstrate through extensive numerical experiments that our method is significantly faster than solving the problem to optimality using the constraint-generation technique of \cite{gotoh2006minimal}. The experiments further indicate that our method significantly outperforms the SMVIE approach in terms of solution quality. Also, our method outperforms the scheme which utilizes the sufficient conditions of Kellner \etal~\cite{kellner2013containment} both in terms of solution time and quality.
	\item We present two important applications of our approach. Firstly, we exploit the bounding ellipsoids to obtain improved decision rule approximations to two-stage DRO models with random recourse, which have resisted effective solution schemes so far. Secondly, we provide ellipsoidal approximations for the set of reachable states in a linear dynamical system when the control set is a polytope.
\end{enumerate}

This article is organized as follows. In Section \ref{sec:copos_refor}, we describe the MVEP and reformulate it as an equivalent GC program. In Section \ref{sec:approx_polytope}, we use that reformulation to derive a tractable SDP that generates a near-optimal approximation to $\Emve$ when the set is defined by affine and quadratic inequalities. In Section~\ref{sec:dro}, we explain the application of our approach for obtaining improved decision rules approximation for a two-stage DRO model with random recourse. {In Section \ref{sec:experiments}, we present numerical experiments comparing the volumes of the ellipsoids generated by our method against those found using other approaches. We also demonstrate the efficacy of our approach in solving a distributionally robust inventory management problem. Finally, we conclude in Section \ref{sec:conc}. Auxiliary proofs and additional numerical experiments can be found in the electronic companion to the paper.}

\subsection{Preliminaries}
\label{sec:prelims}

\paragraph{Notation} For a positive integer $I$, we use $[I]$ to denote the index set $\{ 1,2,\ldots, I \}$. We denote the vector of ones by $\ee$, and the identity matrix by $\Ib$; their dimensions will be clear from the context. We use $\RR^K(\RR_+^K)$ to denote the set of (non-negative) vectors of length $K$, and $\Sbb^K(\Sbb_+^K)$ to denote the set of all $K\times K$ symmetric (positive semidefinite) matrices. In addition, $\Sbb_{++}^K$ represents the set of positive definite matrices. The functions $\tr(\cdot)$ and $\det(\cdot)$ denote the trace and the determinant of the input matrix, respectively. We define $\Diag(\bm v)$ as a diagonal matrix with vector $\bm v$ on its main diagonal. The symbols $\norm{\bm v}_1$ and $\norm{\bm v}$ denote the $\ell_1$-norm and $\ell_2$-norm of vector $\bm v$, respectively. The vertical concatenation of two scalars or vectors $\bm u$ and $\bm v$ is denoted by $[\bm u; \bm v]$. For a matrix $\bm M \in \RR^{I\times J}$, we use $\bm M_{:j} \in \RR^I$ to denote its $j$-th column, and $\bm M_{i:}\in \RR^J$ to denote the transpose of its $i$-th row. We represent the interior and the conic hull of a set $S$ by $\interior(S)$ and $\cone(S)$ respectively.

\paragraph{Generalized Copositive Matrices} We use $\C(\K)$ to denote the set of generalized copositive matrices with respect to cone $\K\subseteq \RR^K$, \ie, $\C(\K) = \{ \bm M \in \Sbb^K : \bm x^\top \bm M \bm x \geq 0\ \forall \bm x \in \K \}$. The set of copositive matrices is a special case of such a set when $\K = \RR_+^K$. We use $\C^\ast(\K)$ to denote the set of generalized completely positive matrices with respect to cone $\K$, \ie, $\C^\ast(\K) = \{ \bm M \in \Sbb^K: \bm M = \sum_{i \in [I]} \bm x_i \bm x_i^\top , \bm x_i \in \K \}$ where $I$ is a positive integer. The cones $\C(\K)$ and $\C^\ast(\K)$ are duals of each other \cite{sturm2003cones}. For any $\bm P, \bm Q\in\Sbb^K$ and cone $\bar{\C} \subseteq \Sbb^K$, the conic inequality $\bm P\succeq_{\bar{\C}}\bm Q$ indicates that $\bm P-\bm Q$ is an element of $\bar{\C}$. We drop the subscript and simply write $\bm P \succeq \bm Q$, when $\bar{\C} = \Sbb^K_+$. Finally, the relation $\bm M \succ_{\C(\K)}\bm 0$ indicates that $\bm M$ is strictly copositive, \ie, $\bm x^\top \bm M \bm x > 0$ for all $\bm x \in \K, \bm x \neq \bm 0$.

\paragraph{Ellipsoids} We define $\E(\bm A, \bm b) = \{ \bm x \in \RR^K:\ \norm{\bm A \bm x + \bm b}^2 \leq 1 \}$ as an ellipsoid with parameters $\bm A\in \Sbb_{++}^K$ and $\bm b \in \RR^K$. The volume of $\E(\bm A,\bm b)$, denoted by $\vol(\E(\bm A,\bm b))$, is proportional to $\det(\bm A^{-1}) = 1/\det(\bm A)$. In this paper, we drop the proportionality constant, and say that $\vol(\E(\bm A,\bm b)) = 1/\det(\bm A)$; since we use the volume as a metric for comparing different ellipsoids, doing so does not affect the results. We define the \emph{radius} of a $K$-dimensional ellipsoid as $\vol(\cdot)^{1/K}$; this metric is proportional to the radius of a sphere with the same volume as that of the ellipsoid. Finally, we say the two ellipsoids are equal, \ie, $\E(\bm A_1,\bm b_1) = \E(\bm A_2,\bm b_2)$, if and only if $\bm A_1 = \bm A_2$ and $\bm b_1 = \bm b_2$.

\section{Generalized Copositive Reformulation}
\label{sec:copos_refor}
In this section, we develop a generalized copositive reformulation for the MVEP. It is well-known that $\Emve = \E(\bm A,\bm b)$ if and only if $(\bm A,\bm b)$ is the unique optimal solution to the following semi-infinite convex optimization problem \cite{todd2016minimum}:
\begin{equation}
\label{eq:MVE}
\begin{array}{clll}
\displaystyle\minimize&\displaystyle -\log\det(\bm A)\\
\subjectto& \bm A\in \Sbb^K,\; \bm b \in \RR^K,\; \zab \leq 1,
\end{array}
\tag{$\mathcal{MVE}$}
\end{equation}
where
\begin{equation}
\label{eq:MVE_cons}
\zab = \sup_{\bm x \in \Pc}\norm{\bm A\bm x + \bm b}^2 = \sup_{\bm x \in \Pc}\left\{\bm x^\top \bm A^2 \bm x + 2\bm b^\top \bm A \bm x + \bm b^\top \bm b\right\}.
\end{equation}
The objective function of \eqref{eq:MVE} minimizes the logarithm of the volume, which implicitly restricts $\bm A$ to be positive definite. Minimizing the logarithm of the volume makes the objective function convex in $\bm A$. The constraint $\zab\leq 1$ forces every element of $\Pc$ to lie inside the ellipsoid. We are now ready to present the main result of this section, where we derive necessary and sufficient conditions for certifying whether an ellipsoid contains another set.

%In this paper, we focus on sets $\Pc \subset \RR^K$ that satisfy the following assumption.
%\begin{assume}\label{assum:set}
%	The set $\Pc$ is compact, convex, and full-dimensional.
%\end{assume}
%\noindent Compactness guarantees the existence of a bounding ellipsoid, and is also useful for achieving the GC reformulation. The convexity assumption is made without loss of generality; if the set is not convex, then we can instead consider its convex hull without affecting $\Emve$. If $\Pc$ is not full-dimensional, then the ellipsoid $\Emve$ is degenerate with zero volume.

%\begin{theorem}
%	\label{thm:copos_refor}
%	Let $\Pc$ be a set satisfying Assumption \ref{assum:set}. Let a cone $\K \in \RR^{K+1}$ be defined as
%	\begin{equation}
%	\label{eq:def_cone}
%	\K = \cone\left(\{[\bm x; 1]: \ \bm x \in \Pc\}\right).
%	\end{equation}
%	Then, $\Emve = \E(\bm A,\bm b)$ if and only if $(\bm A, \bm b , \bm F,\bm g, h)$ is an optimal solution to the following optimization problem:
%	\begin{equation}
%	\label{eq:MVE_COP}
%	\begin{array}{clll}
%	\minimize &-\log\det(\bm A)\\
%	\subjectto&\bm A\in \Sbb^K,\; \bm b \in \RR^K,\; \bm F\in \Sbb^K,\; \bm g\in \RR^K,\; h\in \RR,\vspace{.1cm}\\
%	&	\begin{bmatrix} \bm F  & \bm g \\ \bm g^\top  & h - 1 \end{bmatrix} \preceq_{\C(\K)} \bm 0,\vspace{.1cm}\\
%	& \begin{bmatrix} \bm F  & \bm g & \bm A\\ \bm g^\top & h& \bm b^\top\\ 
%	\bm A & \bm b& \bm \Ib \end{bmatrix} \succeq \bm 0.
%	\end{array}
%	\end{equation}
%\end{theorem}

\begin{theorem}
	\label{thm:copos_refor}
	Let $\Pc$ be a set satisfying Assumption \ref{assum:set}. Let the cone $\K \subseteq \RR^{K+1}$ be defined as
	\begin{equation}
	\label{eq:def_cone}
	\K = \cone\left(\{[\bm x; 1]: \ \bm x \in \Pc\}\right).
	\end{equation}
	If $\bm A\in \Sbb_{++}^K$ and $\bm b\in \RR^K$, then the ellipsoid $\E(\bm A,\bm b)$ contains $\Pc$ if and only if there exist $\bm F\in \Sbb^K,\; \bm g\in \RR^K,\; h\in \RR,$ such that
	\begin{equation}
	\label{eq:gc_conditions}
	\begin{array}{clll}
	\begin{bmatrix} \bm F  & \bm g \\ \bm g^\top  & h - 1 \end{bmatrix} \preceq_{\C(\K)} \bm 0 \quad \text{and} \quad \begin{bmatrix} \bm F  & \bm g & \bm A\\ \bm g^\top & h& \bm b^\top\\ 
	\bm A & \bm b& \bm \Ib \end{bmatrix} \succeq \bm 0.
	\end{array}
	\end{equation}
\end{theorem}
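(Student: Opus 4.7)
The plan is to unpack the containment condition $\Pc \subseteq \E(\bm A, \bm b)$ into a single matrix inequality over the cone $\C(\K)$, and then show that the two stated conditions are an equivalent lifted reformulation of it using a Schur complement.

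First I would observe that $\Pc \subseteq \E(\bm A, \bm b)$ is equivalent to saying $\norm{\bm A \bm x + \bm b}^2 - 1 \leq 0$ for every $\bm x \in \Pc$, which I rewrite as the homogeneous quadratic inequality
\begin{equation*}
[\bm x; 1]^\top \bm M [\bm x; 1] \leq 0 \quad \forall \bm x \in \Pc, \qquad \text{where} \qquad \bm M = \begin{bmatrix} \bm A^2 & \bm A \bm b \\ \bm b^\top \bm A & \bm b^\top \bm b - 1 \end{bmatrix}.
\end{equation*}
Since $\Pc$ is compact and convex and $\K$ is the conic hull of $\{[\bm x;1]:\bm x\in\Pc\}$, every $\bm y \in \K$ has the form $\lambda[\bm x;1]$ for some $\lambda \geq 0, \bm x \in \Pc$, and $\bm y^\top \bm M \bm y = \lambda^2 [\bm x;1]^\top \bm M [\bm x;1]$. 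Therefore the containment is equivalent to $\bm M \preceq_{\C(\K)} \bm 0$.

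Next I would handle the second inequality in \eqref{eq:gc_conditions}. Because the bottom-right block is $\bm I \succ \bm 0$, a Schur complement argument shows that the $(2K+1)$-dimensional PSD constraint is equivalent to
\begin{equation*}
\begin{bmatrix} \bm F & \bm g \\ \bm g^\top & h \end{bmatrix} \succeq \begin{bmatrix} \bm A^2 & \bm A \bm b \\ \bm b^\top \bm A & \bm b^\top \bm b \end{bmatrix}, \qquad \text{i.e.,} \qquad \bm N := \begin{bmatrix} \bm F & \bm g \\ \bm g^\top & h - 1 \end{bmatrix} \succeq \bm M.
\end{equation*}
So the two stated conditions jointly assert the existence of a matrix $\bm N \in \Sbb^{K+1}$ with $-\bm N \in \C(\K)$ and $\bm N - \bm M \in \Sbb_+^{K+1}$.

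The key observation I would then invoke is that $\Sbb_+^{K+1} \subseteq \C(\K)$, since any PSD matrix gives a nonnegative quadratic form on all of $\RR^{K+1}$ and hence on the subcone $\K$. Using this, $-\bm M = -\bm N + (\bm N - \bm M)$ belongs to $\C(\K) + \Sbb_+^{K+1} = \C(\K)$, proving the ``if'' direction. For ``only if,'' assuming $\bm M \preceq_{\C(\K)} \bm 0$, I would simply take $\bm F = \bm A^2,\; \bm g = \bm A \bm b,\; h = \bm b^\top \bm b$: the first inequality then coincides with $\bm M \preceq_{\C(\K)} \bm 0$, while the block matrix in the second inequality factors as $\bm Z^\top \bm Z$ with $\bm Z = [\bm A,\; \bm b,\; \bm I]$ and is therefore automatically PSD. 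I do not anticipate a real obstacle here beyond choosing the correct Schur complement partition so that the shifted form $\bm N$ aligns exactly with the ellipsoid matrix $\bm M$; once the auxiliary variable $\bm N$ is named and the inclusion $\Sbb_+^{K+1} \subseteq \C(\K)$ is exploited, the equivalence is immediate.
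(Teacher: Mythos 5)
Your proof is correct, but it reaches the key intermediate fact by a genuinely different and more elementary route than the paper. The paper first rewrites $\sup_{\bm x\in\Pc}\norm{\bm A\bm x+\bm b}^2$ as a completely positive program over $\C^\ast(\K)$ \`a la Burer, dualizes it, and invokes a Slater-point construction (its Lemmas \ref{lem:tau0} and \ref{lem:strict_copos}) to get strong duality, arriving at exactly your condition $\bm M\preceq_{\C(\K)}\bm 0$ with $\bm M = [\bm A\ \ \bm b]^\top[\bm A\ \ \bm b]-\Diag(\bm 0,1)$. You instead observe that, because $\Pc$ is convex, every element of $\K=\cone(\{[\bm x;1]:\bm x\in\Pc\})$ is a nonnegative scaling $\lambda[\bm x;1]$, so homogeneity of the quadratic form gives the equivalence between containment and $\bm M\preceq_{\C(\K)}\bm 0$ directly --- no duality, no Slater point, no Lemmas \ref{lem:tau0}--\ref{lem:strict_copos}. (Do note that this step uses convexity of $\Pc$, not compactness, since it is convexity that collapses finite conic combinations of lifted points to a single scaled lifted point.) The final linearization step is essentially identical in both treatments: your auxiliary matrix $\bm N$ together with the inclusion $\Sbb_+^{K+1}\subseteq\C(\K)$ is exactly the content of the paper's Lemma \ref{lem:linearize_A'A}, which you reprove inline, and your choice $\bm F=\bm A^2$, $\bm g=\bm A\bm b$, $h=\bm b^\top\bm b$ with the factorization $\bm Z^\top\bm Z$ matches the paper's ``only if'' direction. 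What the paper's longer route buys is a template: the CP-reformulation-plus-duality machinery is what the authors reuse verbatim in Remarks \ref{remark:projection}--\ref{remark:mink_sum}, whereas your direct argument buys brevity and transparency for the theorem as stated (and in fact also extends to those remarks, since the relevant cones there are still generated by lifted points of convex sets).
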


Before proving Theorem \ref{thm:copos_refor}, we discuss its implications.  The theorem implies that the constraint ${\zab\leq 1}$ in \eqref{eq:MVE} can be replaced by the constraints in \eqref{eq:gc_conditions}. Therefore, $\Emve = \E(\bm A,\bm b)$ is the minimum volume ellipsoid if and only if $(\bm A, \bm b , \bm F,\bm g, h)$ is the unique optimal solution to the following generalized copositive program:
	\begin{equation}
	\label{eq:MVE_COP}
	\begin{array}{clll}
	\minimize &-\log\det(\bm A)\\
	\subjectto&\bm A\in \Sbb^K,\; \bm b \in \RR^K,\; \bm F\in \Sbb^K,\; \bm g\in \RR^K,\; h\in \RR,\\
	&\eqref{eq:gc_conditions} \textnormal{ holds}.
	\end{array}
	\end{equation}
\begin{remark}
	In this article, we refer to a problem with $-\log\det(\cdot)$ minimization objective and semidefinite (copositive) constraints as a ``semidefinite (copositive) program,'' albeit with a slight abuse of terminology. The reason is that minimization of $-\log\det(\cdot)$ is equivalent to minimization of $-(\det(\cdot))^{1/K}$; the latter can be reformulated as a problem with linear objective and additional semidefinite inequality constraints (see, \eg, \cite[Section 4.2]{ben2001lectures}). Some modeling frameworks, like YALMIP \cite{yalmip} which we use for our experiments, automatically carry out this conversion before sending the problem to the solver. 
\end{remark}
Next, we present the following technical lemmas which are needed for the proof of Theorem \ref{thm:copos_refor}.
\begin{lemma}
	\label{lem:tau0}
	Let $\K$ be the cone defined in \eqref{eq:def_cone}. If $[\bm x; \tau] \in \K$, then $\tau \geq 0$. Furthermore, $\tau = 0$ only if $\bm x = \bm 0$.
\end{lemma}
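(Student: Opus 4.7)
The plan is to peel back the definition of $\K$ directly. By construction, every element of $\K$ is a non-negative combination of points of the form $[\bm x'; 1]$ with $\bm x' \in \Pc$, and since $\Pc$ is convex, such a combination collapses into a single term $\lambda[\bm x'; 1]$ with $\lambda \geq 0$ and $\bm x' \in \Pc$. Both claims then fall out by comparing coordinates: the last entry yields $\tau = \lambda \geq 0$, and $\tau = 0$ forces $\lambda = 0$, which in turn forces $\bm x = \lambda \bm x' = \bm 0$.

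The one thing worth double-checking is the convention on $\cone(\cdot)$. If the paper intends the closed conic hull, I would argue via sequences $[\bm x_n; \tau_n] = \lambda_n [\bm x'_n; 1]$ converging to $[\bm x; \tau]$. The first claim is preserved under limits by closedness of $\RR_+$. For the second, when $\tau_n \to 0$, compactness of $\Pc$ bounds $\|\bm x'_n\|$ uniformly by some constant $M$, so $\|\bm x_n\| = \lambda_n \|\bm x'_n\| = \tau_n \|\bm x'_n\| \leq M \tau_n \to 0$, and hence $\bm x = \bm 0$. Compactness of $\Pc$ (Assumption \ref{assum:set}) is the only substantive hypothesis invoked.

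There is no genuine obstacle here; the lemma is essentially bookkeeping that isolates the geometry of the lifted cone and ensures its recession direction in the lifted variable $\tau$ behaves as expected. I anticipate that its role in the proof of Theorem \ref{thm:copos_refor} will be to legitimize the rescaling $\bm x / \tau \in \Pc$ whenever $[\bm x; \tau] \in \K$ with $\tau > 0$, and to rule out nontrivial points of $\K$ lying on the hyperplane $\tau = 0$ — precisely the moves needed to connect the copositive condition in \eqref{eq:gc_conditions} with the supremum defining $\zab$ in \eqref{eq:MVE_cons}.
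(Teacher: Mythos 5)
Your proof is correct and follows essentially the same route as the paper: decompose an element of $\K$ as a non-negative combination of lifted points $[\bm x_s;1]$ with $\bm x_s\in\Pc$ and compare the last coordinate (the paper keeps the finite sum rather than collapsing it to a single term via convexity, but the argument is identical). Your extra remarks on the closed conic hull and on the lemma's role in Theorem \ref{thm:copos_refor} are accurate but not needed, since the paper uses the plain (non-closed) conic hull.
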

\begin{proof}
	From the definition of $\K$, there exist points $\bm x_s \in \Pc$ and coefficients \mbox{$\lambda_s\geq 0$}, $s \in [S]$, such that \mbox{$[\bm x;\tau] = \sum_{s \in [S]}\lambda_s[\bm x_s;1]$.} By comparing the last element, we get $\tau= \sum_{s \in [S]} \lambda_s \geq 0 $, since $\lambda_s\geq 0$. In addition, $\tau = 0$ implies that $\lambda_s = 0$ for all~$s \in [S]$, which further implies that $\bm x = \bm 0$.
\end{proof}
{
	
	\begin{lemma}
		\label{lem:proper_cone}
		The cone $\K$ defined in \eqref{eq:def_cone} is proper.
	\end{lemma}
	\begin{proof}
		The compactness of $\Pc$ implies that $\K$ is convex and closed. Since $\Pc$ has nonempty interior, any point $\bm x$ in the interior of $\Pc$ yields a point $[\bm x; 1]$ in the interior of $\K$; therefore $\K$ has nonempty interior. Finally to see that $\K$ is pointed, let $[\bm x; \tau] \in \K $ and  $-[\bm x; \tau] \in \K$. Using Lemma \ref{lem:tau0}, we have that $\tau \geq 0$ and $-\tau \geq 0$, which implies that $\tau = 0$. Again using Lemma \ref{lem:tau0}, we get that $[\bm x; \tau] = \bm 0$, which implies that $\K$ is pointed.  
	\end{proof}
	
	\begin{lemma}
		\label{lem:quadratic_constant}
		Let $\bm M \in \Sbb^K$ be a symmetric matrix and $S \subseteq \RR^K$ be a set with nonempty interior. If $\bm v^\top \bm M \bm v = 0$ for all $\bm v \in S$, then $\bm M = \bm 0$.
	\end{lemma}
	\begin{proof}
		Let $\lambda$ be an eigenvalue of $\bm M$ and $\bm q$ be the corresponding eigenvector of unit length. Since $S$ has nonempty interior, for any $\bm v \in \interior(S)$, there exists $\overline{\tau} > 0$ such that $\bm v + \tau \bm q \in S$ for all $\tau \in [0, \overline{\tau}]$. Therefore, $(\bm v + \tau \bm q)^\top \bm M \bm (\bm v + \tau \bm q) = 0$ for all $\tau \in [0, \overline{\tau}].$ Furthermore,
		\begin{equation*}
		(\bm v + \tau \bm q)^\top \bm M \bm (\bm v + \tau \bm q) = \bm v^\top \bm M \bm v + 2\tau \bm q^\top \bm M \bm v + \tau^2 \bm q^\top\bm M \bm q = 2\tau \lambda \bm q^\top \bm v + \tau^2 \lambda \bm q^\top \bm q = \lambda \tau (2\bm q^\top \bm v + \tau).
		\end{equation*}
		Thus, $ \lambda \tau (2\bm q^\top \bm v + \tau) = 0$ for all $\tau \in [0, \overline{\tau}]$. Since the term $\tau(2\bm q^\top \bm v + \tau)$ is quadratic in the scalar $\tau$, it cannot be zero for more than two values of $\tau$. This implies that the previous equality holds for all $\tau \in [0, \overline\tau]$ only if $\lambda = 0$. Therefore, any eigenvalue of $\bm M$ is zero, which implies that $\bm M = \bm 0$.
	\end{proof}

	\begin{lemma}
		\label{lem:slater_point}
		Let $\K$ be the cone defined in \eqref{eq:def_cone}. There exist $\bm X \in \Sbb^K$ and $\bm x \in \RR^K$ such that
		$$\begin{bmatrix}  \bm X & \bm x\\
		\bm x^\top & 1
		\end{bmatrix}\succ_{\C^\ast(\K)} \bm 0.$$
	\end{lemma}
	\begin{proof}
		We start by showing that $\C(\K)$ is pointed. Let $\bm M \in \Sbb^{K+1}$ be such that $\bm M \in \C(\K)$ and $-\bm M \in \C(\K)$. For this choice of $\bm M$, for all $\bm x \in \K$, we have that $\bm x^\top \bm M \bm x \geq 0$ and $-\bm x^\top \bm M \bm x \geq 0$, which implies that $\bm x^\top \bm M \bm x = 0$ for all $\bm x \in \K$. Since $\K$ has non-empty interior (by Lemma \ref{lem:proper_cone}), Lemma \ref{lem:quadratic_constant} implies that $\bm M = \bm 0$. Therefore $\C(\K)$ is pointed, which implies that its dual cone, $\C^\ast(\K)$, has non-empty interior~\cite[Section 2.6.1]{boyd2004convex}. Consider $\bm M \in \interior(\C^\ast(\K))$. The matrix $\bm M$ is positive definite (see discussion below Corollary 8.1 in \cite{burer2012copositive}); therefore any element on its diagonal, which includes the bottom-right component, is strictly positive. By scaling $\bm M$ such that the bottom-right component is $1$, we get another matrix in the interior of $\C^\ast(\K)$. Hence the lemma holds.
	\end{proof}
}

The following lemma is an extension of another recently proved result found in \cite[Lemma 4]{mittal2019robust}.
\begin{lemma}
	\label{lem:linearize_A'A}
	Let $\bm M\in\Sbb^K$ be a symmetric matrix  and $\bm A\in\RR^{J\times K}$ be an arbitrary matrix. Then, for any proper cone $\K\subset \RR^K$, the inequality $ \bm M \succeq_{\C(\K)} \bm A^\top\bm A$ is satisfied if and only if there exists a matrix $\bm H\in\Sbb_+^K$ such that 
	\begin{equation}
	\label{eq:inequalities_for_H}
	\bm M\succeq_{\C(\K)}\bm H\quad\text{ and }\quad \begin{bmatrix} \bm H & \bm A^\top \\ \bm A & \Ib \end{bmatrix}\succeq\bm 0. 
	\end{equation}	
\end{lemma}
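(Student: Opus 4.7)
The plan is to prove both directions by exploiting the Schur complement characterization of the bottom block inequality in \eqref{eq:inequalities_for_H} and the fundamental fact that $\Sbb_+^K \subseteq \C(\K)$ for any cone $\K\subseteq\RR^K$ (every PSD matrix is generalized copositive with respect to every cone). Concretely, since the $(2,2)$-block in the block matrix is $\Ib \succ \bm 0$, the conic inequality $\begin{bmatrix}\bm H & \bm A^\top\\ \bm A & \Ib\end{bmatrix}\succeq \bm 0$ is equivalent via Schur complement to $\bm H - \bm A^\top\bm A \succeq \bm 0$.

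For the ``only if'' direction, I would simply take the candidate $\bm H = \bm A^\top\bm A$. Then $\bm H\in\Sbb_+^K$ automatically (as a Gram matrix), the copositive inequality $\bm M \succeq_{\C(\K)} \bm H$ reduces to the hypothesis $\bm M\succeq_{\C(\K)} \bm A^\top\bm A$, and $\bm H - \bm A^\top\bm A = \bm 0$, so by Schur complement the block PSD condition holds trivially. This direction is immediate.

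For the ``if'' direction, I would start from the two inequalities in \eqref{eq:inequalities_for_H}, convert the block PSD one into $\bm H - \bm A^\top\bm A \in \Sbb_+^K$ via Schur complement, invoke $\Sbb_+^K \subseteq \C(\K)$ to upgrade this to $\bm H - \bm A^\top\bm A \succeq_{\C(\K)} \bm 0$, and finally add the two copositive inequalities to obtain
\[
\bm M - \bm A^\top\bm A \;=\; (\bm M - \bm H) + (\bm H - \bm A^\top\bm A) \;\succeq_{\C(\K)}\; \bm 0,
\]
using that $\C(\K)$ is a convex cone (in particular closed under addition). This is exactly the desired $\bm M\succeq_{\C(\K)} \bm A^\top\bm A$.

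There is essentially no obstacle here; the lemma is a clean linearization statement and its proof reduces to combining two standard ingredients. The only subtlety worth flagging is to verify that the containment $\Sbb_+^K \subseteq \C(\K)$ is used at the right step (it requires $\K\subseteq\RR^K$, which is given since $\K$ is a proper cone in $\RR^K$), and to state the Schur complement direction carefully, since $\Ib\succ \bm 0$ and so the equivalence holds without any pseudo-inverse technicalities. The fact that the lemma is phrased as an \emph{extension} of an earlier result in \cite{mittal2019robust} suggests that the earlier version handled the special case where $\bm A$ is a row (so $\bm A^\top\bm A$ is rank one); the present proof simultaneously covers arbitrary $J\times K$ matrices with no additional work because the Schur complement argument is insensitive to the rank of $\bm A$.
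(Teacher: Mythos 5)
Your proposal is correct and follows essentially the same route as the paper: the forward direction by setting $\bm H=\bm A^\top\bm A$, and the reverse direction by applying the Schur complement to obtain $\bm H\succeq\bm A^\top\bm A$, upgrading via $\Sbb_+^K\subseteq\C(\K)$, and adding the two conic inequalities. No gaps.
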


\begin{proof}
	($\Rightarrow$) The statement holds immediately by setting $\bm H=\bm A^\top\bm A$.\\
	($\Leftarrow$) Assume that there exists such a matrix $\bm H\in\Sbb_+^K$. By the Schur complement, the second inequality in \eqref{eq:inequalities_for_H} implies that $\bm H\succeq\bm A^\top\bm A$, which in turn implies that $\bm H\succeq_{\C(\K)}\bm A^\top\bm A$ (since $\Sbb_+^K \subseteq \C(\K)$ for any $\K$). Combining this with the first inequality in \eqref{eq:inequalities_for_H} implies that $\bm M \succeq_{\C(\K)} \bm A^\top\bm A$.	
\end{proof}
\noindent We now return to the proof of Theorem~\ref{thm:copos_refor}.
\begin{proof}[Proof of Theorem~\ref{thm:copos_refor}.]
The set $\Pc$ can be expressed in terms of the cone $\K$ as $\Pc = \{\bm x\in \RR^K\ : \ [\bm x; 1] \in \K \}.$ Therefore, we can write \eqref{eq:MVE_cons} as
\begin{equation}
\label{eq:cone_form}
\zab = \sup_{[\bm x ; 1] \in \K}\ \bm x^\top \bm A^2 \bm x + 2\bm b^\top \bm A \bm x + \bm b^\top \bm b.
\end{equation}
The optimization problem \eqref{eq:cone_form} is equivalent to the following completely positive program \cite{burer2012copositive}:
\begin{equation}
\label{eq:CPP}
\begin{array}{clll}
\zab = &\sup&\displaystyle\tr(\bm A^2\bm X)+2\bm b^\top\bm A \bm x + \bm b^\top \bm b\\
&\st& \displaystyle \bm x\in\RR^K,\; \bm X\in \Sbb^K,\\
&&\begin{bmatrix}  \bm X & \bm x\\
\bm x^\top & 1
\end{bmatrix}\succeq_{\C^\ast(\K)} \bm 0.
\end{array}
\end{equation}
The dual of this completely positive program can be written as:
\begin{equation}
\label{eq:COP}
\begin{array}{rlll}
Z_{\text{d}}(\bm A,\bm b) = \displaystyle \inf_{\rho \in \RR} & \rho \\
\st& \begin{bmatrix} - \bm A^2  & - \bm A \bm b \\- \bm b^\top\bm A & \rho - \bm b^\top \bm b \end{bmatrix} \succeq_{\C(\K)} \bm 0.
\end{array}
\end{equation}
{Using Lemma \ref{lem:slater_point}, we conclude that a Slater point exists in the optimization problem~\eqref{eq:CPP}. Hence, strong duality holds and $\zab = Z_{\text{d}}(\bm A,\bm b)$. Furthermore, there exists a dual feasible solution which attains the value $Z(\bm A, \bm b)$, since a Slater point exists in the primal problem \eqref{eq:CPP} \cite[Theorem 1.4.2]{ben2001lectures}}. Using these facts, we have that $\zab \leq 1$ if and only if there exists a feasible solution to problem \eqref{eq:COP} whose objective function value is at most $1$. Therefore, $\zab \leq 1$ if and only if there exists $\rho \leq 1$ such that
\begin{equation*}
\begin{bmatrix} - \bm A^2  & - \bm A \bm b \\ - \bm b^\top\bm A & \rho - \bm b^\top \bm b \end{bmatrix} \succeq_{\C(\K)} \bm 0,
\end{equation*}
which, in turn, holds if and only if
$$\begin{bmatrix} - \bm A^2  & - \bm A \bm b \\ - \bm b^\top\bm A & 1 - \bm b^\top \bm b \end{bmatrix} \succeq_{\C(\K)} \bm 0,$$
or equivalently,
\begin{equation}\label{eq:copos_refor}\begin{bmatrix} \bm 0  & \bm 0 \\ \bm 0 & 1 \end{bmatrix} \succeq_{\C(\K)} \begin{bmatrix} \bm A  & \bm b \end{bmatrix}^\top\begin{bmatrix} \bm A  & \bm b \end{bmatrix}. \end{equation}
The conic inequality \eqref{eq:copos_refor} has non-linearity because of the terms involving the product of the decision variables $\bm A$ and $\bm b$. However, by Lemma \ref{lem:linearize_A'A}, this constraint is satisfied if and only if there exist variables $\bm F \in \Sbb^K,\; \bm g \in \RR^K$ and $h \in \RR$ such that the constraints \eqref{eq:gc_conditions} hold. Therefore, the constraint $\zab\leq 1$ is equivalent to constraints \eqref{eq:gc_conditions}. Hence, the claim follows.
\end{proof}

\noindent Theorem \ref{thm:copos_refor} implies that $\Emve$ can be found by solving the GC program \eqref{eq:MVE_COP}, which is difficult in general. In the next section, we discuss tractable approximations of \eqref{eq:MVE_COP} for special cases of $\Pc$. However, before doing so, we provide some generalizations to the GC reformulation \eqref{eq:MVE_COP}.

{
\begin{remark}[Affine mapping of a set]
	\label{remark:affine_mapping}
	Let $\Pc\subseteq \RR^K$ be a set satisfying Assumption \ref{assum:set}. Let $\overline{\Pc} = \bm C \Pc + \bm d = \{ \bm C \bm x + \bm d : \bm x \in \Pc \} \subset \RR^J$ be an affine mapping of $\Pc$, where $\bm C\in \RR^{J\times K}$ and $\bm d \in \RR^J$. In order to obtain conditions for an ellipsoid to contain $\overline{\Pc}$, note that $Z(\bm A, \bm b) = \sup_{\bm x \in \Pc }\norm{\bm A (\bm C \bm x + \bm d) + \bm b}^2.$ Following the steps of the proof of Theorem \ref{thm:copos_refor}, we can see that if $\bm A\in \Sbb_{++}^J$ and $\bm b\in \RR^J$, then the ellipsoid $\E(\bm A,\bm b)$ contains $\overline{\Pc}$ if and only if there exist $\bm F\in \Sbb^{K},\; \bm g\in \RR^{K},\; h\in \RR,$ such that
	\begin{equation*}
	\begin{array}{clll}
	\begin{bmatrix} \bm F  & \bm g \\ \bm g^\top  & h - 1 \end{bmatrix} \preceq_{\C(\K)} \bm 0 \quad \text{and} \quad \begin{bmatrix} \bm F  & \bm g & (\bm A \bm C)^\top \\ \bm g^\top & h& (\bm A \bm d + \bm b)^\top\\ 
	\bm A \bm C & \bm A \bm d + \bm b& \bm \Ib \end{bmatrix} \succeq \bm 0,
	\end{array}
	\end{equation*}
	where $ \K = \cone\left(\{[\bm x; 1]: \ \bm x \in \Pc\}\right).$
\end{remark}
}
\begin{remark}[Union of sets]
	Let $\Pc = \cup_{\ell \in [L]}\Pc_\ell$, where the set $\Pc_\ell$ satisfies Assumption \ref{assum:set} for all $\ell \in [L]$. The set $\Pc$ does not satisfy Assumption \ref{assum:set} since it may not be convex. However, it is possible to extend Theorem \ref{thm:copos_refor} to this case as follows. Note that an ellipsoid contains the union of sets if and only if it contains every set. We can  apply Theorem \ref{thm:copos_refor} to every set $\Pc_\ell$ to arrive at the fact that ellipsoid $\E(\bm A,\bm b)$ contains $\Pc$ if and only if there exist $\bm F_\ell\in \Sbb^K,\; \bm g_\ell\in \RR^K,\; h_\ell\in \RR\ \forall \ell \in [L]$, such that
	\begin{equation*}
	\begin{array}{clll}
	&\begin{bmatrix} \bm F_\ell  & \bm g_\ell \\ \bm g_\ell^\top  & h_\ell - 1 \end{bmatrix} \preceq_{\C(\K_\ell)} \bm 0 \quad\text{and}\quad \begin{bmatrix} \bm F_\ell  & \bm g_\ell & \bm A\\ \bm g_\ell^\top & h_\ell& \bm b^\top\\ \bm A & \bm b& \bm \Ib \end{bmatrix} \succeq \bm 0 \quad \forall \ell \in [L],
	\end{array}
	\end{equation*}
	where $\K_\ell = \cone\left(\{[\bm x; 1]: \ \bm x \in \Pc_\ell\}\right), \ell \in [L]$.
\end{remark} 

\begin{remark}[Minkowski sum of sets]
	\label{remark:mink_sum}
	For all $\ell \in [L]$, let the set $\Pc_\ell$ satisfy Assumption \ref{assum:set} and $\K_\ell$ be the corresponding cone defined as in \eqref{eq:def_cone}. Let $\Pc = \left\{ \sum_{\ell \in [L]} \bm x_\ell :\ \bm x_\ell \in \Pc_\ell\ \forall \ell \in [L] \right\}$ be the Minkowski sum of these sets.  Although $\Pc$ satisfies Assumption \ref{assum:set}, it might not have a polynomial sized representation. As an example, if every $\Pc_\ell$ is a polytope, then $\Pc$ is a polytope defined by constraints whose number can potentially grow exponentially with $L$. However, we can still reformulate \eqref{eq:MVE} for $\Pc$ as a GC program of polynomial size as follows. Observe that
	\begin{equation*}
	\begin{array}{clll}
	\zab& = \displaystyle\sup_{\bm x_\ell \in \Pc_\ell \forall \ell \in [L]}\left\{ \left(\sum_{\ell \in [L]} \bm x_\ell\right)^\top \bm A^2 \left(\sum_{\ell \in [L]} \bm x_\ell\right) + 2\bm b^\top \bm A \left(\sum_{\ell \in [L]} \bm x_\ell\right) + \bm b^\top \bm b\right\}\vspace{1mm}\\
	& \displaystyle = \sup_{\substack{\bm x =[\bm x_1;\bm x_2;\cdots;\bm x_L],}\atop \substack{\bm x_\ell \in \Pc_\ell\ \forall \ell \in [L]}} \left\{ \bm x^\top \tilde{\bm A} \tilde{\bm A}^\top \bm x + 2\bm b^\top \tilde{\bm A}^\top \bm x + \bm b^\top \bm b\right\},
	\end{array}
	\end{equation*}
	where $\tilde{\bm A} = \begin{bmatrix}\bm A &  \bm A& \cdots& \bm A
	\end{bmatrix}^\top \in \RR^{LK\times K}.$
	By defining the cone $\K$ as $$\K = \{ [\bm x_1;\bm x_2;\cdots;\bm x_L;\tau] \in \RR^{LK+1} : [\bm x_\ell;\tau] \in \K_\ell \; \forall \ell \in [L]  \}$$ and repeating the steps in the proof of Theorem \ref{thm:copos_refor}, we arrive at the fact that ellipsoid $\E(\bm A,\bm b)$ contains $\Pc$ if and only if there exist $ \bm F\in \Sbb^{LK},\; \bm g\in \RR^{LK},\; h\in \RR$ such that
	\begin{equation*}
	\begin{array}{clll}
	&\begin{bmatrix} \bm F  & \bm g \\ \bm g^\top  & h - 1 \end{bmatrix} \preceq_{\C(\K)} \bm 0\quad\text{and}\quad \begin{bmatrix} \bm F  & \bm g & \tilde{\bm A}\\ \bm g^\top & h& \bm b^\top\\ \tilde{\bm A}^\top & \bm b& \bm \Ib \end{bmatrix} \succeq \bm 0.
	\end{array}
	\end{equation*}
\end{remark}
\noindent In the previous three remarks, minimizing the function $-\log\det(\bm A)$ subject to the corresponding constraints leads to a GC reformulation of \eqref{eq:MVE}.

%\begin{remark}[Relaxing full-dimensionality assumption]
%	If $\Pc$ is not full-dimensional, and one is interested in identifying the ellipsoid which minimizes the volume in a lower dimensional subspace, it can be achieved as follows. If $ \bm U \in \RR^{L\times K}$ and $\bm v \in \RR^L$ and $ \Pc = \{ \bm x \in \widehat{\Pc}: \bm U \bm x = \bm v \}$ is a set in $K-L$ dimensional subspace of $\RR^K$, then we can rewrite the set $\Pc$ as $\Pc_{\bm w} = \{ \bm w \in \RR^{K-L}: \bm x_0 + \bm U_\perp^\top \bm w \in \widehat{\Pc} \}$, where $\bm U_\perp$ is an orthogonal complement of $\bm U$, and $\bm x_0$ is any point satisfying $\bm U \bm x_0 = \bm v$. We can now apply the reformulation \eqref{eq:MVE_COP} to the set $\Pc_{\bm w}$ to find the ``subspace restricted'' minimum volume ellipsoid.  
%\end{remark}

\section{Tractable Approximations for Polytopes}
\label{sec:approx_polytope}

In this section, we use the reformulation \eqref{eq:MVE_COP} to present tractable semidefinite programming approximations for \eqref{eq:MVE} in the case where the set $\Pc$ is a polytope defined as
\begin{equation}\label{eq:define_polytope}\Pc = \left\{\bm x\in \RR^K\ : \ \bm S\bm x \leq \bm t\right\},\end{equation}
where $\bm S\in \RR^{J\times K}$ and $\bm t \in \RR^J$. We start with our proposed approximation, and then present theoretical comparisons with alternative approaches to approximate $\Emve$.

\begin{theorem}
	\label{thm:C0_polytope}
	Let $\Pc$ be a polytope defined as in \eqref{eq:define_polytope} that satisfies Assumption \ref{assum:set}. Consider any $\bm A \in \Sbb_{++}^K$ and $\bm b\in \RR^K$. Then, an ellipsoid $\E(\bm A, \bm b)$ contains $\Pc$ if there exist $\bm N \in \RR_+^{J\times J},\; \bm F\in \Sbb^K,\; \bm g\in \RR^K,\; h\in \RR$ such that
	\begin{equation}
	\label{eq:suff_conds_polytope}
	\begin{array}{clll}
	& \displaystyle \begin{bmatrix} \bm F&\bm g\\\bm g^\top & h - 1 \end{bmatrix} \preceq   -\begin{bmatrix} -\bm S^\top \\ \bm t^\top \end{bmatrix}\bm N\begin{bmatrix} -\bm S &\bm t \end{bmatrix}, \quad \text{and} \quad \begin{bmatrix}\bm F &\bm g & \bm A\\ \bm g^\top & h& \bm b^\top \\ \bm A& \bm b & \Ib \end{bmatrix} \succeq \bm 0.
	\end{array}
	\end{equation}
\end{theorem}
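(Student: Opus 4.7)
The plan is to apply Theorem \ref{thm:copos_refor} as a black box. That theorem tells us that $\E(\bm A,\bm b)\supseteq \Pc$ is equivalent to the existence of $\bm F, \bm g, h$ satisfying \eqref{eq:gc_conditions}. The second matrix inequality in \eqref{eq:gc_conditions} is literally identical to the second matrix inequality in \eqref{eq:suff_conds_polytope}, so the entire problem reduces to showing that the first inequality in \eqref{eq:suff_conds_polytope}, which is an ordinary semidefinite inequality, implies the first inequality in \eqref{eq:gc_conditions}, which is a generalized copositive inequality over $\K$.

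To carry this out, I would first give an explicit description of elements of $\K$ in the polytopal case. By definition, any $[\bm x;\tau]\in\K$ can be written as a non-negative combination $[\bm x;\tau]=\sum_{s\in[S]}\lambda_s[\bm x_s;1]$ with $\bm x_s\in\Pc$ and $\lambda_s\ge 0$. Since $\bm S \bm x_s\le \bm t$ for each $s$, summing gives $\bm S\bm x \le \tau \bm t$, or equivalently
\[
\bm t\tau - \bm S\bm x \;=\; \bigl[-\bm S\quad \bm t\bigr]\bigl[\bm x;\tau\bigr]\;\ge\; \bm 0
\]
component-wise. This is the key geometric observation: the polytopal description lifts to a coordinate-wise non-negativity statement on $\K$.

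Next, I would exploit the entry-wise non-negativity of $\bm N$. Because both the vector $\bm t\tau-\bm S\bm x$ and the matrix $\bm N$ are elementwise non-negative, the scalar quadratic form
\[
[\bm x;\tau]^\top \begin{bmatrix} -\bm S^\top \\ \bm t^\top \end{bmatrix}\bm N \begin{bmatrix} -\bm S & \bm t \end{bmatrix}[\bm x;\tau]
= (\bm t\tau-\bm S\bm x)^\top \bm N\, (\bm t\tau-\bm S\bm x)
\]
is non-negative for every $[\bm x;\tau]\in\K$. Coupling this with the semidefinite inequality in \eqref{eq:suff_conds_polytope} (which, evaluated on $[\bm x;\tau]$, says the left-hand quadratic form is bounded above by the negative of the right-hand quadratic form), we conclude
\[
[\bm x;\tau]^\top\begin{bmatrix}\bm F & \bm g\\ \bm g^\top & h-1\end{bmatrix}[\bm x;\tau]\;\le\; 0 \qquad \forall\, [\bm x;\tau]\in \K,
\]
which is precisely the statement that the first matrix lies in $-\C(\K)$. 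Together with the unchanged second inequality, this verifies \eqref{eq:gc_conditions}, so Theorem \ref{thm:copos_refor} delivers $\Pc\subseteq\E(\bm A,\bm b)$.

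There is no real obstacle here: the argument is essentially ``specialize the copositive cone to the polytopal case by using linear Farkas-type multipliers.'' The only point requiring a little care is that $\bm N$ is not required to be symmetric, so strictly speaking the semidefinite inequality in \eqref{eq:suff_conds_polytope} is to be read after symmetrizing the right-hand side; since the quadratic form is unchanged under symmetrization and $\bm N+\bm N^\top$ is still entry-wise non-negative, the above reasoning is unaffected. I would include one short sentence to that effect to keep the presentation clean.
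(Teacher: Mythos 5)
Your proposal is correct and follows essentially the same route as the paper's proof: reduce to showing the ordinary semidefinite inequality implies the generalized copositive one, then evaluate the quadratic form on an arbitrary $[\bm x;\tau]\in\K$ and use $\bm N\ge\bm 0$ together with $\tau\bm t-\bm S\bm x\ge\bm 0$ to conclude non-positivity. Your derivation of $\bm S\bm x\le\tau\bm t$ directly from the conic-hull definition of $\K$ (rather than quoting the explicit polyhedral form of $\K$) and your remark on symmetrizing $\bm N$ are minor refinements, not a different argument.
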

\begin{proof}
	For the polytope $\Pc$, the cone $\K$ defined in \eqref{eq:def_cone} can be written as
	$\K = \left\{ [\bm x; \tau] \in \RR^{K+1}: \; \tau \geq 0,\;\bm S\bm x \leq \tau \bm t \right\}.$
	We show that the constraints \eqref{eq:suff_conds_polytope} imply the constraints \eqref{eq:gc_conditions}. Since the second constraints in \eqref{eq:suff_conds_polytope} and \eqref{eq:gc_conditions} are the same, we show that the first constraint of \eqref{eq:suff_conds_polytope} implies the generalized copositive constraint in \eqref{eq:gc_conditions} which proves our claim. For any $[\bm x;\tau] \in \K$, we have that
	$$\begin{bmatrix} \bm x\\ \tau \end{bmatrix}^\top \begin{bmatrix} \bm F&\bm g\\\bm g^\top & h - 1 \end{bmatrix} \begin{bmatrix} \bm x\\ \tau \end{bmatrix} \leq -\begin{bmatrix} \bm x\\ \tau \end{bmatrix}^\top \begin{bmatrix} -\bm S^\top \\ \bm t^\top \end{bmatrix}\bm N\begin{bmatrix} -\bm S &\bm t \end{bmatrix}\begin{bmatrix} \bm x\\ \tau \end{bmatrix} = - (\tau \bm t - \bm S \bm x)^\top \bm N(\tau \bm t - \bm S \bm x)\leq 0,$$
	where the first inequality follows from the first semidefinite inequality in \eqref{eq:suff_conds_polytope} and the final inequality holds since $\bm N \geq \bm 0$ and $\tau \bm t - \bm S \bm x \geq \bm 0$. Thus, $$\begin{bmatrix} \bm F&\bm g\\\bm g^\top & h - 1 \end{bmatrix} \preceq_{\C(\K)}\bm 0.$$
	Hence, the claim follows.
\end{proof}

Theorem \ref{thm:C0_polytope} provides a way to approximate \eqref{eq:MVE}. We choose the ellipsoid with minimum volume among those that satisfy the conditions of Theorem \ref{thm:C0_polytope}. This can be achieved by solving the following tractable SDP:
\begin{equation}
\label{eq:MVE_approx_polytope}
\begin{array}{clll}
\minimize &-\log\det(\bm A)\\
\subjectto&\bm A\in \Sbb^K,\; \bm b \in \RR^K,\; \bm N \in \RR_+^{J\times J},\; \bm F\in \Sbb^K,\; \bm g\in \RR^K,\; h\in \RR,\\
& \eqref{eq:suff_conds_polytope} \textnormal{ holds.}
\end{array}
\end{equation}
If $(\bm A, \bm b ,\bm N, \bm F,\bm g, h)$ is an optimal solution to \eqref{eq:MVE_approx_polytope}, then we propose the use of the ellipsoid $\Ecop = \E(\bm A, \bm b)$ as an approximation of $\Emve$.

%\begin{remark}
%	The approximation presented above is one of the simplest among a series of approximations to the cone of generalized copositive matrices $\C(\K)$. There exists a hierarchy of increasingly tight conservative approximations of the set $\C(\K)$ \cite{lasserre2001global,parrilo2000structured,zuluaga2006lmi}. One can get an ellipsoid of lower volume by employing these higher order approximations, at the cost of greater computational time.
%\end{remark}

Next, we present a theoretical comparison of the quality of $\Ecop$ with the other methods of approximating $\Emve$. We denote by $\Esmvie$ the ellipsoid obtained by scaling the maximum volume inscribed ellipsoid by a factor of $K$. We discuss the SDP formulation for determining $\Esmvie$ in Appendix \ref{appendix:smvie}. In Theorem \ref{thm:compare_polytope}, presented below, we show that the volume of $\Ecop$ cannot exceed the volume of $\Esmvie$. For the theoretical analysis, it is convenient to combine the two semidefinite inequalities of \eqref{eq:suff_conds_polytope} using the Schur complement, and write \eqref{eq:MVE_approx_polytope} equivalently as follows:
\begin{equation}
\label{eq:MVE_Ab}
\begin{array}{clll}
\minimize &-\log\det(\bm A)\\
\subjectto&\bm A\in \Sbb^K,\; \bm b \in \RR^K,\;\bm N \in \RR_+^{J\times J},\vspace{1mm}\\
& \displaystyle \begin{bmatrix} \bm A \\\bm b^\top \end{bmatrix}\begin{bmatrix} \bm A &\bm b \end{bmatrix} \preceq  \begin{bmatrix} \bm 0 & \bm 0\\ \bm 0 & 1 \end{bmatrix} - \begin{bmatrix} -\bm S^\top \\ \bm t^\top \end{bmatrix}\bm N\begin{bmatrix} -\bm S &\bm t \end{bmatrix}. 
\end{array}
\end{equation}
We begin with the following lemma which we use for comparing the volumes of $\Ecop$ and $\Esmvie$.
\begin{lemma}[{\cite[Theorem 7.8.7]{horn1990matrix}}]
	\label{lem:det_transpose}
	If $\bm M \in \RR^{K\times K}$ is a square matrix with real entries such that \mbox{$\bm M + \bm M^\top \succ \bm 0$}, then  
	$$\det\left(\frac{1}{2}\left(\bm M+ \bm M^\top \right) \right) \leq \det(\bm M).$$
\end{lemma}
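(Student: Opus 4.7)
The plan is to prove the inequality by splitting $\bm M$ into its symmetric and anti-symmetric parts and then reducing the problem to a determinant computation for $\Ib$ plus a skew-symmetric matrix.

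First I would write $\bm M = \bm S + \bm A$, where $\bm S = \tfrac{1}{2}(\bm M + \bm M^\top) \in \Sbb_{++}^K$ and $\bm A = \tfrac{1}{2}(\bm M - \bm M^\top)$ is real skew-symmetric. Since $\bm S\succ\bm 0$, it has a well-defined positive-definite square root $\bm S^{1/2}$, and I would factor
\[
\det(\bm M) \;=\; \det\!\bigl(\bm S^{1/2}(\Ib + \bm S^{-1/2}\bm A\bm S^{-1/2})\bm S^{1/2}\bigr) \;=\; \det(\bm S)\cdot\det(\Ib + \bm B),
\]
where $\bm B := \bm S^{-1/2}\bm A\bm S^{-1/2}$. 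A short check shows $\bm B^\top = \bm S^{-1/2}\bm A^\top\bm S^{-1/2} = -\bm B$, so $\bm B$ is again real skew-symmetric.

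Next, the key fact is that $\det(\Ib + \bm B) \geq 1$. I would invoke the standard spectral property that every real skew-symmetric matrix has its (complex) eigenvalues occurring in purely imaginary conjugate pairs $\pm i\mu_j$ with $\mu_j \in \RR$, together possibly with a simple zero eigenvalue if $K$ is odd. Consequently the eigenvalues of $\Ib+\bm B$ are the numbers $1 \pm i\mu_j$ (and possibly $1$), and
\[
\det(\Ib + \bm B) \;=\; \prod_j (1+i\mu_j)(1-i\mu_j) \;=\; \prod_j (1+\mu_j^2) \;\geq\; 1,
\]
where the trailing factor from any zero eigenvalue is simply $1$. Multiplying by $\det(\bm S) > 0$ gives $\det(\bm M) \geq \det(\bm S) = \det\!\bigl(\tfrac{1}{2}(\bm M + \bm M^\top)\bigr)$, as required.

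I do not expect a real obstacle here; the only subtlety is bookkeeping the odd-dimensional case (where a skew-symmetric matrix is always singular) so that the eigenvalue product argument remains valid, and verifying that $\bm S^{-1/2}\bm A\bm S^{-1/2}$ is indeed skew-symmetric so the conjugate-pair structure is preserved after the similarity-like transformation. Both are one-line checks.
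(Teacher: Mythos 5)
Your proof is correct and complete. Note that the paper itself offers no proof of this lemma --- it is simply quoted from Horn and Johnson \cite[Theorem 7.8.7]{horn1990matrix} --- so there is no in-paper argument to compare against; your derivation is the standard one for this classical (Ostrowski--Taussky-type) inequality. The two points you flag as needing checks are indeed the only ones that matter, and both go through: $\bm B = \bm S^{-1/2}\bm A\bm S^{-1/2}$ satisfies $\bm B^\top = -\bm B$ because $\bm S^{-1/2}$ is symmetric and $\bm A^\top = -\bm A$, and the spectrum of a real skew-symmetric matrix consists of conjugate pairs $\pm i\mu_j$ plus possible zeros, so $\det(\Ib+\bm B) = \prod_j (1+\mu_j^2) \geq 1$ regardless of the parity of $K$.
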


\begin{theorem}
	\label{thm:compare_polytope}
	If $\Pc$ is a polytope defined as in \eqref{eq:define_polytope}, then $\vol(\Ecop) \leq \vol(\Esmvie)$.
\end{theorem}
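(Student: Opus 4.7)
My plan is to prove the theorem by exhibiting an explicit feasible triple for the SDP \eqref{eq:MVE_Ab} whose shape matrix $\bm A$ equals $\bm A^{\text{smvie}} := (K\bm B^\star)^{-1}$ and offset $\bm b = -\bm A\bm d^\star$, where $(\bm B^\star,\bm d^\star)$ is an optimal MVIE pair of $\Pc$. Because \eqref{eq:MVE_Ab} minimizes $-\log\det(\bm A)$, producing such a triple would immediately yield $\det(\bm A^{\text{cop}})\geq\det(\bm A^{\text{smvie}})$ and hence $\vol(\Ecop) = 1/\det(\bm A^{\text{cop}}) \leq 1/\det(\bm A^{\text{smvie}}) = \vol(\Esmvie)$.

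The construction of the required $\bm N\in\RR_+^{J\times J}$ leans on the John-type optimality conditions for the MVIE: there exist nonnegative weights $y_j$, supported on the facets touched by the MVIE, such that $\sum_j y_j = K$, $\sum_j y_j \bm S_{j:}/r_j = \bm 0$, and $\sum_j (y_j/r_j^2)\bm S_{j:}\bm S_{j:}^\top = \bm B^{\star -2}$, where $r_j := t_j - \bm S_{j:}^\top\bm d^\star \geq 0$. A natural candidate is $\bm N_{ij} := y_iy_j/(Kr_ir_j)$ for $i\neq j$ and $\bm N_{ii}:=0$, which is entrywise nonnegative. Using the asymmetric John identity $\sum_j y_j \bm S_{j:}/r_j = \bm 0$, the rank-one ``diagonal'' term that would otherwise appear in $\bm S^\top\bm N\bm S$ cancels, producing $-\bm S^\top\bm N\bm S = \tfrac{1}{K}\sum_j (y_j/r_j)^2\bm S_{j:}\bm S_{j:}^\top$. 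After an analogous calculation for the off-diagonal block $\bm A\bm b - \bm S^\top\bm N\bm t$ (which is controlled by the same cancellation) and for the scalar block $\|\bm b\|^2 + \bm t^\top\bm N\bm t - 1$ (which is handled via $\sum_j y_j = K$), the full $(K+1)\times(K+1)$ LMI would be verified block-by-block.

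The main obstacle lies in the top-left block: one must show $-\bm S^\top\bm N\bm S \succeq \bm A^2 = (K\bm B^\star)^{-2}$, which reduces to $\sum_j (y_j/r_j)^2\bm S_{j:}\bm S_{j:}^\top \succeq \tfrac{1}{K}\bm B^{\star -2} = \tfrac{1}{K}\sum_j (y_j/r_j^2)\bm S_{j:}\bm S_{j:}^\top$. This comparison holds term-by-term whenever $y_j\geq 1/K$ on the active support, but this cannot be assumed in general. It is here that Lemma \ref{lem:det_transpose} (Ky Fan's inequality) is expected to enter: by introducing a non-symmetric auxiliary matrix $\bm Q$ built from $\bm A$ and the MVIE data, whose symmetric part coincides with $\bm A^{\text{smvie}}$ and whose determinant matches the scaling $\det(\bm A^{\text{smvie}})$, the inequality $\det((\bm Q+\bm Q^\top)/2)\leq\det(\bm Q)$ lets one replace the pointwise-PSD comparison by a determinantal one, thereby bypassing the term-by-term requirement. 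This last determinantal step is, in my view, the most delicate portion of the argument.
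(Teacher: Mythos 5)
Your overall strategy --- exhibit a feasible point of \eqref{eq:MVE_Ab} with objective at most $\log\vol(\Esmvie)$ --- is the right one, but your execution has a genuine gap that you yourself flag and do not close. With $\bm N_{ij}=y_iy_j/(Kr_ir_j)$ off-diagonal and the John identity $\sum_j (y_j/r_j)\bm S_{j:}=\bm 0$, one gets $-\bm S^\top\bm N\bm S=\tfrac1K\sum_j (y_j/r_j)^2\bm S_{j:}\bm S_{j:}^\top$, and the required block inequality $-\bm S^\top\bm N\bm S\succeq (K\bm B^\star)^{-2}=\tfrac{1}{K^2}\sum_j(y_j/r_j^2)\bm S_{j:}\bm S_{j:}^\top$ becomes $\sum_j \tfrac{y_j}{r_j^2}(y_j-\tfrac1K)\bm S_{j:}\bm S_{j:}^\top\succeq\bm 0$, which fails in general since John weights can satisfy $y_j<1/K$ on part of the support. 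Your proposed rescue --- invoking Lemma \ref{lem:det_transpose} to ``replace the pointwise-PSD comparison by a determinantal one'' --- cannot work as stated: feasibility of the LMI in \eqref{eq:MVE_Ab} is a positive-semidefiniteness requirement on a specific matrix, and no inequality between determinants can certify it. Moreover, the premise that $\Esmvie$ itself must be feasible for \eqref{eq:MVE_Ab} is unsupported: that SDP is only a \emph{restriction} of the exact copositive program, so an ellipsoid containing $\Pc$ need not admit a certificate $\bm N\geq\bm 0$ of the required form.

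The paper sidesteps exactly this obstacle by not insisting that the certified ellipsoid be $\Esmvie$. It takes any feasible $(\bm\Lambda,\bm\rho)$ of the Lagrange dual \eqref{eq:smvie_dual_proof} of the SMVIE problem, forms the SVD $\bm\Lambda^\top\bm S=\bm U\bm\Sigma\bm V^\top$, and sets $\bm A=\kappa\bm V\bm\Sigma\bm V^\top$, $\bm b=\kappa\bm V\bm U^\top\bm\Lambda^\top\bm t$, $\bm N=\kappa^2(\bm\rho\bm\rho^\top-\bm\Lambda\bm\Lambda^\top)$ with $\kappa=\exp(1-\bm\rho^\top\bm t)$; this choice makes $\bm A^2=\kappa^2\bm S^\top\bm\Lambda\bm\Lambda^\top\bm S$ hold \emph{exactly}, so the LMI is verified by direct computation rather than by a spectral comparison. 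Lemma \ref{lem:det_transpose} then enters only at the very end, to compare \emph{objective values}: $-\log\det(\bm\Lambda^\top\bm S)\leq-\log\det\bigl(\tfrac12(\bm\Lambda^\top\bm S+\bm S^\top\bm\Lambda)\bigr)$, showing the certified (generally different, possibly smaller) ellipsoid has volume at most the SMVIE dual objective. If you want to salvage your argument, this is the missing idea: build $\bm A$ from the asymmetric product $\bm\Lambda^\top\bm S$ (equivalently, from the asymmetric analogue of your John-multiplier matrix) so that feasibility is exact, and reserve the Ky Fan inequality for the volume comparison alone.
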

\begin{proof}
The logarithm of the volume of $\Esmvie$ is equal to the optimal value of the following optimization problem (see Appendix \ref{appendix:smvie}): 
\begin{equation}
\label{eq:smvie_dual_proof}
\begin{array}{cll}
\displaystyle \minimize&\displaystyle K\bm \rho^\top \bm t - K - \log\det\left( - \frac{1}{2}\left(\bm S^\top\bm \Lambda  + \bm \Lambda^\top \bm S\right)\right)\\
\subjectto& \bm \Lambda \in \RR^{J\times K},\; \bm \rho \in \RR^J,\\ 
&\bm S^\top \bm \rho = \bm 0,\\
& \norm{\bm \Lambda_{j:}} \leq \rho_j \ \forall j \in [J].
\end{array}
\end{equation}
We can compare the volumes of $\Ecop$ and $\Esmvie$ by comparing the optimal values of the minimization problems \eqref{eq:MVE_Ab} and \eqref{eq:smvie_dual_proof}. To prove the theorem, we show that any feasible solution in \eqref{eq:smvie_dual_proof} can be used to construct a feasible solution to \eqref{eq:MVE_Ab} with the same or lower objective function value. To this end, consider a solution $(\bm \Lambda, \bm \rho)$ which satisfies the constraints of \eqref{eq:smvie_dual_proof}. Define $\kappa = \exp(1 - \bm\rho^\top \bm t)$. Also, let $\bm\Lambda^\top \bm S = \bm U \bm \Sigma \bm V^\top$ be the singular value decomposition of $\bm \Lambda^\top \bm S$, where $\bm U, \bm V\in \RR^{K\times K}$ are orthonormal matrices, and $\bm \Sigma \in \Sbb^{K}$ is a diagonal matrix. We note for later use that $\det(\bm\Lambda^\top\bm S) = \det(\bm U)\det(\bm \Sigma)\det(\bm V^\top)  = \det(\bm \Sigma)$. Consider the following solution to \eqref{eq:MVE_Ab}:
\begin{equation} \label{eq:feasible_soln}\bm A = \kappa \bm V\bm\Sigma \bm V^\top,\; \bm b = \kappa \bm V\bm U^\top \bm \Lambda^\top \bm t,\; \bm N = \kappa^2\left(\bm \rho\bm \rho^\top - \bm \Lambda\bm \Lambda^\top \right).\end{equation}
We demonstrate that this solution satisfies the constraints of \eqref{eq:MVE_Ab}. Note that
$$\bm A^2 = \kappa^2 \bm V\bm\Sigma \bm V^\top\bm V\bm\Sigma \bm V^\top = \kappa^2 \bm V\bm\Sigma \bm U^\top\bm U\bm\Sigma \bm V^\top = \kappa^2 \bm S^\top\bm \Lambda\bm \Lambda^\top \bm S,$$
since $\bm V^\top\bm V = \bm U^\top\bm U = \Ib$. Similarly $\bm A \bm b = \kappa^2 \bm S^\top \bm \Lambda\bm \Lambda^\top \bm t$, and $\bm b^\top \bm b =  \kappa^2\bm t^\top \bm \Lambda\bm \Lambda^\top \bm t$. Therefore,
\begin{equation*}
\begin{array}{rll}
\begin{bmatrix} -\bm S^\top \\ \bm t^\top \end{bmatrix} \bm N\begin{bmatrix} -\bm S &\bm t \end{bmatrix} = \begin{bmatrix} \bm S^\top \bm N \bm S & -\bm S^\top \bm N \bm t \\ -\bm t^\top \bm N\bm S & \bm t^\top \bm N \bm t \end{bmatrix} & = \kappa^2 \begin{bmatrix} \bm S^\top \left(\bm \rho\bm \rho^\top - \bm \Lambda\bm \Lambda^\top \right) \bm S & -\bm S^\top \left(\bm \rho\bm \rho^\top - \bm \Lambda\bm \Lambda^\top \right) \bm t \\ -\bm t^\top \left(\bm \rho\bm \rho^\top - \bm \Lambda\bm \Lambda^\top \right)\bm S & \bm t^\top \left(\bm \rho\bm \rho^\top - \bm \Lambda\bm \Lambda^\top \right) \bm t
\end{bmatrix} \vspace{1mm}\\
&= \kappa^2 \begin{bmatrix} -\bm S^\top\bm \Lambda\bm \Lambda^\top \bm S & \bm S^\top \bm \Lambda\bm \Lambda^\top \bm t \\ \bm t^\top \bm \Lambda\bm \Lambda^\top \bm S & (\bm \rho^\top \bm t)^2 - \bm t^\top \bm \Lambda\bm \Lambda^\top \bm t \end{bmatrix} \vspace{1mm}\\
& = \begin{bmatrix} \bm 0 & \bm 0\\ \bm 0 & (\kappa \bm \rho^\top\bm t)^2 \end{bmatrix} - \begin{bmatrix} \bm A^2 & \bm A\bm b\\ \bm b^\top \bm A & \bm b^\top \bm b \end{bmatrix},
\end{array}
\end{equation*}
where the third equality follows from the constraint $\bm S^\top \bm \rho = \bm 0$. We claim that $(\kappa \bm \rho^\top\bm t)^2 \leq 1$. To see this, first note that since the polytope $\Pc$ is non-empty, by Farkas' Lemma, any vector $\bm \rho$ satisfying $\bm S^\top \bm \rho = \bm 0$ and $\bm \rho \geq \bm 0$ also satisfies $\bm \rho^\top \bm t \geq 0$. Secondly, using the inequality $\exp(\nu) \geq 1 + \nu$ with $\nu = \bm \rho^\top \bm t - 1$, we get that $\kappa^{-1} = \exp(\bm \rho^\top \bm t - 1) \geq \bm\rho^\top \bm t$, which implies that $\kappa \bm \rho^\top \bm t \leq 1$. Combining these two inequalities, we get that $0\leq \kappa \bm \rho^\top \bm t \leq 1$, which implies that $(\kappa \bm \rho^\top\bm t)^2 \leq 1$. Therefore, we have that
$$\begin{bmatrix}\bm A \\ \bm b^\top \end{bmatrix} \begin{bmatrix}\bm A & \bm b \end{bmatrix}  = \begin{bmatrix} \bm 0 & \bm 0\\ \bm 0 & (\kappa \bm \rho^\top\bm t)^2 \end{bmatrix} - \begin{bmatrix} -\bm S \\ \bm t \end{bmatrix}\bm N\begin{bmatrix} -\bm S &\bm t \end{bmatrix}\preceq \begin{bmatrix} \bm 0 & \bm 0\\ \bm 0 & 1 \end{bmatrix} - \begin{bmatrix} -\bm S \\ \bm t \end{bmatrix}\bm N\begin{bmatrix} -\bm S &\bm t \end{bmatrix}.$$
Next, since $\bm N = \kappa^2\left(\bm \rho\bm \rho^\top - \bm \Lambda\bm \Lambda^\top \right)$, we have that $N_{ij} = \kappa^2\left(\rho_i\rho_j - \bm \Lambda_{i:}^\top \bm\Lambda_{j:}\right)\geq \kappa^2\left(\rho_i\rho_j - \norm{\bm \Lambda_{i:}}\norm{ \bm\Lambda_{j:}}\right) \geq 0,$
where the two inequalities follow from Cauchy-Schwarz and the constraint $\norm{\bm \Lambda_{j:}} \leq \rho_j$ respectively. Therefore, $\bm N \geq \bm 0$. Next, we compare the objective values. Note that
\begin{equation*}
\begin{array}{rll}-\log\det(\bm A) = -\log\det(\kappa \bm V\bm\Sigma \bm V^\top) &= -\log(\kappa^K \det(\bm V\bm\Sigma \bm V^\top))\\
& = - K\log(\kappa) - \log(\det(\bm V)\det(\bm \Sigma)\det(\bm V^\top)\\
& = K(\bm \rho^\top\bm t - 1) - \log\det(\bm \Lambda^\top \bm S)\\
&\displaystyle \leq K(\bm\rho^\top \bm t - 1) - \log\det\left(\frac{1}{2}(\bm \Lambda^\top \bm S + \bm S^\top \bm \Lambda )\right),
\end{array}
\end{equation*}
where the final inequality follows from Lemma \ref{lem:det_transpose}. Hence, the feasible solution \eqref{eq:feasible_soln} gives a lower objective function value. Thus, the claim follows.
\end{proof}

\begin{corollary}
	If the polytope $\Pc$ is a simplex, then $\Emve = \Ecop = \Esmvie$.
\end{corollary}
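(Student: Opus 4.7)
The plan is a short sandwich argument that leverages a classical result of Fritz John together with Theorem~\ref{thm:compare_polytope}. First, I would invoke John's theorem for simplices: if $\Pc\subset\RR^K$ is a simplex, then the L\"owner--John ellipsoid $\Emve$ equals exactly the ellipsoid obtained by dilating the maximum volume inscribed ellipsoid of $\Pc$ by a factor of $K$ about its center, i.e., $\Emve=\Esmvie$ (see, e.g., \cite{henk2012lowner,john2014extremum}). Simplices are precisely the equality case of John's containment bound, which is the fact that will drive the argument.

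Next, I would chain two inequalities on volumes. On the one hand, any feasible solution $(\bm A,\bm b,\bm N,\bm F,\bm g,h)$ of \eqref{eq:MVE_approx_polytope} produces, by Theorem~\ref{thm:C0_polytope}, an ellipsoid $\E(\bm A,\bm b)$ that contains $\Pc$; hence the optimizer $\Ecop$ of \eqref{eq:MVE_approx_polytope} is feasible for \eqref{eq:MVE}, giving $\vol(\Emve)\le\vol(\Ecop)$. On the other hand, Theorem~\ref{thm:compare_polytope} yields $\vol(\Ecop)\le\vol(\Esmvie)$. Combining with John's equality $\vol(\Esmvie)=\vol(\Emve)$, I obtain
\[
\vol(\Emve)\;\le\;\vol(\Ecop)\;\le\;\vol(\Esmvie)\;=\;\vol(\Emve),
\]
so all three volumes coincide. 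Both $\Ecop$ and $\Esmvie$ are then ellipsoids of minimum volume containing~$\Pc$, and by uniqueness of $\Emve$ (noted after Assumption~\ref{assum:set} and in \cite{todd2016minimum}) they must all be equal as ellipsoids, not merely as volumes.

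The only non-routine ingredient is the John-theorem identity $\Emve=\Esmvie$ for a simplex; everything else is a direct application of results already established in the excerpt. If one preferred a self-contained derivation rather than citing John, the obstacle would be to verify the equality case by an explicit computation on the standard simplex and then appeal to the affine invariance of all three ellipsoids to conclude for an arbitrary simplex, but citing the classical statement is cleaner and keeps the corollary a genuine one-line consequence of Theorem~\ref{thm:compare_polytope}.
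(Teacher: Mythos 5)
Your proposal is correct and follows essentially the same route as the paper: both invoke the classical fact that $\Emve=\Esmvie$ for a simplex, sandwich $\vol(\Ecop)$ between $\vol(\Emve)$ and $\vol(\Esmvie)$ using Theorem~\ref{thm:compare_polytope} and the feasibility of $\Ecop$, and conclude by uniqueness of the minimum volume ellipsoid. Your write-up is merely more explicit about the lower bound $\vol(\Emve)\le\vol(\Ecop)$, which the paper leaves implicit.
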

\begin{proof}
	It is known that $\Emve = \Esmvie$, if the set $\Pc$ is a simplex \cite[Section 8.4.1]{boyd2004convex}. Therefore, $\vol(\Esmvie) = \vol(\Emve)$, which implies that $\vol(\Ecop) = \vol(\Emve)$. Because of the uniqueness of the minimum volume ellipsoid, we get that $\Ecop =  \Emve$.
\end{proof}

In the next example, we demonstrate that the difference between the volumes of the ellipsoids $\Ecop$ and $\Esmvie$ can be arbitrarily large.

\begin{example}[Chipped Hypercube]
	\label{example:chipped}
 	Consider the polytope: $\Pc = \{ \bm x\in \RR^K : \bm 0 \leq \bm x \leq \ee, \ee^\top \bm x \leq \sqrt{K} \}$ formed by adding one constraint to the unit hypercube. This polytope forms a special case of \eqref{eq:define_polytope} with
 	$\bm S = \left[\Ib;\; -\Ib;\; \ee^\top\right]$, and $\bm t = \left[ \ee;\; \bm 0;\; \sqrt{K}\right].$ Let $R_{\textnormal{mve}}$, $R_{\textnormal{smvie}}$ and $R_{\textnormal{cop}}$ be the radii (defined in Section \ref{sec:prelims}) of the ellipsoids $\Emve$, $\Esmvie$ and $\Ecop$, respectively. In the e-companion, we prove that  $R_{\textnormal{cop}} = O\left( K^{1/4} \right)$ and $R_{\textnormal{smvie}} = \Theta\left( K^{1/2} \right)$. Therefore,  $R_{\textnormal{smvie}}$ grows at a strictly faster rate with the dimension $K$ than $R_{\textnormal{cop}}$. This example demonstrates that the ratio $R_{\textnormal{smvie}}/ R_{\textnormal{cop}}$ can be arbitrarily high, if a large enough $K$ is chosen. We compute the three radii for $K = 2$ to $K=50$, and plot the values in Figure \ref{fig:chippedsqr}(b). We observe that $R_{\textnormal{mve}} $ is very close to $R_{\textnormal{cop}}$, and the two appear to be growing at the same rate with $K$. Figure \ref{fig:chippedsqr}(a) shows the ellipsoids generated by the three methods for $K=2$.
 	\begin{figure}
 		\begin{center}
		\begin{subfigure}[b]{.45\textwidth}
			\includegraphics[width=1\linewidth]{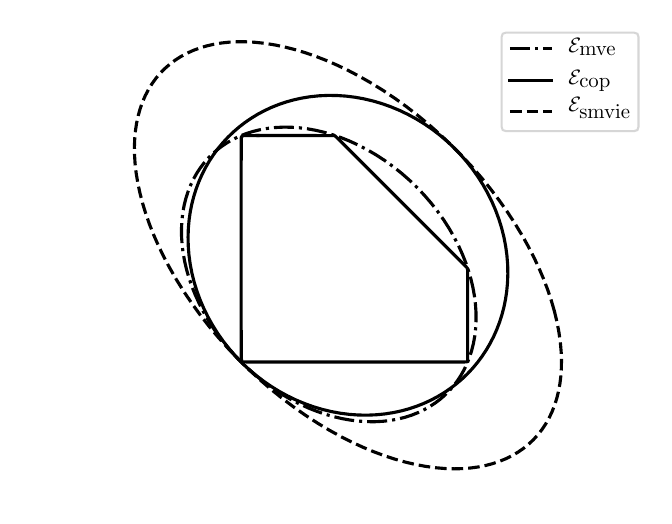}
			\caption{}
		\end{subfigure}\hfill% 
		\begin{subfigure}[b]{.45\textwidth}
			\includegraphics[width=1\linewidth]{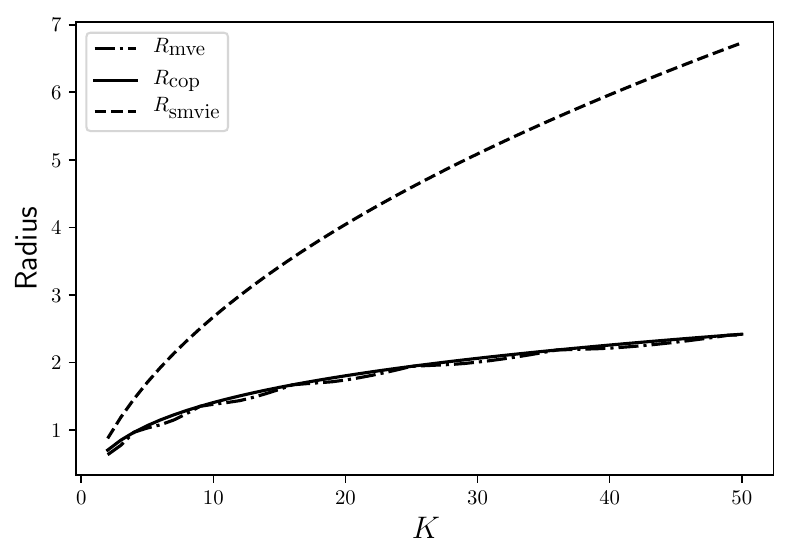}
			\caption{}
		\end{subfigure}\hfill%
		\vspace{-3mm}
		\caption{Chipped Hypercube Example: \textit{a)} The ellipsoids generated by the exact method and the two approximation methods for $K = 2$. \textit{b)} Radii $(\ie, \vol(\cdot)^{1/K})$ of the ellipsoids generated by the three methods for different dimensions $K$.}
		\label{fig:chippedsqr}
 		\end{center}
 	\end{figure}
\end{example}
	
Next, we present the comparison of $\vol(\Ecop)$ with the volume of the ellipsoid provided by the \sproc described in Appendix \ref{appendix:sproc_mvep}. However, the application of \sproc requires an ellipsoidal constraint in addition to the affine inequalities that define the polytope $\Pc$ (see Remark \ref{rem:redundant_ellipsoid} in Appendix \ref{appendix:sproc}). This can be achieved by using any ellipsoid \mbox{$\E(\bm Q,\bm q) = \{\bm x \in \RR^K: \norm{\bm Q \bm x + \bm q}^2 \leq 1\}$} that contains the polytope $\Pc$, and adding $\norm{\bm Q \bm x + \bm q}^2 \leq 1$ as a redundant constraint in the definition of $\Pc$. The ellipsoid  $\E(\bm Q,\bm q)$ already serves as an approximation of $\Emve$. We can then apply the \sproc in the hope of finding an ellipsoid with lower volume; we use $\Esproc$ to denote this ellipsoid. However, in Proposition~\ref{prop:sproc_improvement}, presented below, we show that if the center of $\E(\bm Q,\bm q)$ lies inside $\Pc$, then applying the \sproc provides no improvement and, in fact, returns the ellipsoid $\Esproc = \E(\bm Q,\bm q)$ as its unique optimal solution. This result is counter-intuitive since the \sproc has been successfully applied in cases where $\Pc$ is defined as either the intersection or Minkowski sum of ellipsoids. Furthermore, if $\E(\bm Q,\bm q) = \Esmvie$ is used in the redundant quadratic constraint, then Proposition~\ref{prop:sproc_improvement} implies that the \sproc does not improve upon $\Esmvie$, since the center of $\Esmvie$ lies inside $\Pc$. In that case, $\vol(\Ecop) \leq \vol(\Esmvie) = \vol(\Esproc)$.

\begin{proposition}
	\label{prop:sproc_improvement}
	Let $\Pc$ be a polytope defined as in \eqref{eq:define_polytope} that satisfies Assumption~\ref{assum:set}, and let $\E(\bm Q, \bm q) = \{\bm x \in \RR^K: \norm{\bm Q \bm x + \bm q}^2 \leq 1\}$ be an ellipsoid containing $\Pc$ such that the center of $\E(\bm Q, \bm q)$ lies inside $\Pc$. Then, for the set $\{\bm x \in \RR^K:\ \bm S\bm x \leq \bm t, \norm{\bm Q \bm x + \bm q}^2 \leq 1 \}$, we have that $\Esproc = \E(\bm Q,\bm q)$.
\end{proposition}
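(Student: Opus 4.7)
The plan is to show that the pair $(\bm A, \bm b) = (\bm Q, \bm q)$ is both feasible and optimal for the \sproc-based SDP approximation. Recall from Appendix~\ref{appendix:sproc} that the \sproc certifies $\E(\bm A,\bm b) \supseteq \{\bm x : \bm S\bm x \leq \bm t, \norm{\bm Q\bm x + \bm q}^2 \leq 1\}$ whenever there exist multipliers $\lambda \geq 0$ and $\bm \mu \geq \bm 0$ such that
\begin{equation*}
\norm{\bm A\bm x + \bm b}^2 - 1 \leq \lambda(\norm{\bm Q\bm x + \bm q}^2 - 1) + \bm \mu^\top (\bm S\bm x - \bm t) \quad \forall \bm x \in \RR^K,
\end{equation*}
a polynomial inequality in $\bm x$ equivalent to an SDP matrix condition on the Gram matrix of the right-minus-left-hand side. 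I would first verify that $(\bm A, \bm b, \lambda, \bm \mu) = (\bm Q, \bm q, 1, \bm 0)$ satisfies the above trivially (both sides coincide), exhibiting feasibility of $\E(\bm Q, \bm q)$ with objective $-\log\det \bm Q$.

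The core step is to show that no \sproc-feasible tuple yields a smaller volume. For any feasible $(\bm A, \bm b, \lambda, \bm \mu)$, two consequences of the inequality are crucial. First, matching the leading $\bm x\bm x^\top$ coefficients forces $\bm A^2 \preceq \lambda \bm Q^2$, so $\det(\bm A) \leq \lambda^{K/2}\det(\bm Q)$. Second, evaluating at the center $\bm x_c := -\bm Q^{-1}\bm q$ of $\E(\bm Q,\bm q)$, using $\norm{\bm Q\bm x_c + \bm q}^2 = 0$ together with the hypothesis $\bm x_c \in \Pc$ (which gives $\bm S\bm x_c - \bm t \leq \bm 0$) and $\bm \mu \geq \bm 0$, yields
\begin{equation*}
\norm{\bm A\bm x_c + \bm b}^2 + \lambda \leq 1 + \bm \mu^\top (\bm S\bm x_c - \bm t) \leq 1,
\end{equation*}
which forces $\lambda \leq 1$. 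Combining these two bounds gives $\det(\bm A) \leq \det(\bm Q)$, i.e., $\vol(\E(\bm A,\bm b)) \geq \vol(\E(\bm Q,\bm q))$.

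Finally, uniqueness of the optimizer is settled by examining equality. If $\det(\bm A) = \det(\bm Q)$, then $\lambda = 1$ and $\bm A^2 = \bm Q^2$, forcing $\bm A = \bm Q$ by positive definiteness. Substituting $\lambda = 1$ into the center inequality yields $\norm{\bm A\bm x_c + \bm b}^2 \leq 0$, so $\bm b = -\bm Q\bm x_c = \bm q$. The main obstacle will be confirming that the specific SDP formulation of the \sproc in Appendix~\ref{appendix:sproc} permits both the leading-coefficient reading and the pointwise evaluation at $\bm x_c$ as necessary conditions for feasibility; the hypothesis $\bm x_c \in \Pc$ is essential precisely because it provides the sign information on $\bm S\bm x_c - \bm t$ needed to control the $\bm \mu^\top(\bm S\bm x_c - \bm t)$ term in the center evaluation.
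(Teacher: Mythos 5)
Your proof is correct, but it takes a genuinely different route from the paper's. The paper certifies optimality of $(\bm Q,\bm q)$ by writing down the Lagrangian dual of the \sproc\ SDP and exhibiting an explicit dual feasible point (built from the center $\bm x_c=-\bm Q^{-1}\bm q$) whose objective matches $-\log\det(\bm Q)$; it then appeals to strict convexity of $-\log\det(\cdot)$ for uniqueness. You instead argue purely on the primal side, extracting two necessary consequences of feasibility: restricting the \sproc\ PSD certificate to vectors of the form $[\bm v;0]$ gives $\bm A^2\preceq\lambda\bm Q^2$ and hence $\det(\bm A)\le\lambda^{K/2}\det(\bm Q)$, while evaluating at $[\bm x_c;1]$ — where the quadratic term vanishes and $\bm x_c\in\Pc$ makes $\bm\mu^\top(\bm S\bm x_c-\bm t)\le 0$ — forces $\lambda\le 1$; together these bound every feasible objective below by $-\log\det(\bm Q)$. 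The "obstacle" you flag is not one: both readings are immediate necessary conditions of the $(K{+}1)\times(K{+}1)$ PSD constraint (top-left block of a PSD matrix is PSD after Schur complementation, and applying the quadratic form to $[\bm x_c;1]$ is always legitimate), so no equivalence between the SDP and global polynomial nonnegativity is even needed. Your approach buys elementarity (no dual SDP to derive or verify) and, notably, a cleaner uniqueness argument: your equality analysis pins down both $\bm A=\bm Q$ (via $\lambda=1$ and $\det$-monotonicity under $\preceq$) and $\bm b=\bm q$ (via $\norm{\bm A\bm x_c+\bm b}^2\le 0$), whereas the paper's appeal to strict convexity of $-\log\det(\bm A)$ directly settles only the $\bm A$-component. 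What the paper's duality route buys is a mechanically checkable certificate of optimality and a template that generalizes to settings where the primal necessary conditions are harder to read off.
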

{
\begin{proof}
	 See the e-companion.
\end{proof}

Finally, another method of approximating \eqref{eq:MVE} uses the decision rule approach described in \cite[Section 6]{zhen2017robust}, which we summarize in Appendix \ref{appendix:zrh}. We denote the ellipsoid generated using this approach by $\Ezrh$. In the following proposition, we show that $\vol(\Ezrh)$ is never lower than $\vol(\Esmvie)$. Thus, by Theorem~\ref{thm:compare_polytope}, $\vol(\Ezrh) \geq \vol(\Ecop)$.

\begin{proposition}
	\label{prop:compare_zrh_smvie}
	$\vol(\Ezrh) \geq \vol(\Esmvie)$.
\end{proposition}
\begin{proof} 
	See the e-companion.
\end{proof}

}

\subsection{Sets with Quadratic Constraints}
\label{sec:approx_quad}

Next, we provide a semidefinite programming approximation to \eqref{eq:MVE} when the set $\Pc$ is defined by affine, as well as quadratic inequalities. This generalizes the approximation \eqref{eq:MVE_approx_polytope} developed for the case of a polytope. Specifically, we consider the following set:
\begin{equation}\label{eq:quad_sets}\Pc = \left\{\bm x\in \RR^K\ : \ \bm S\bm x \leq \bm t,\ \norm{\bm Q_i \bm x + \bm q_i}^2 \leq 1 \ \forall i \in [I]\right\},\end{equation}
where $\bm S\in \RR^{J\times K},\;\bm t \in \RR^J,\; \bm Q_i \in \Sbb^K$ and $\bm q_i \in \RR^K$. In the next theorem, we derive sufficient conditions that an ellipsoid $\E(\bm A, \bm b)$ contains the set $\Pc$ defined as in \eqref{eq:quad_sets}.

\begin{theorem}
	\label{thm:C0_quad}
	Let the set $\Pc$ be defined as in \eqref{eq:quad_sets}. Consider any $\bm A \in \Sbb_{++}^K$ and $\bm b\in \RR^K$. Then, an ellipsoid $\E(\bm A, \bm b)$ contains $\Pc$ if there exist $\bm N \in \RR_+^{J\times J},\; \bm F\in \Sbb^K,\; \bm g\in \RR^K,\; h\in \RR,\; \lambda_i \geq 0\ \forall i \in [I],\;\bm\alpha_{ij}\in \RR^K, \kappa_{ij} \in \RR \  \forall i \in [I]\ \forall j \in [J]$ such that
	\begin{equation}
	\label{eq:suff_conds_quad}
	\begin{array}{clll}
	& \norm{\bm\alpha_{ij}} \leq \kappa_{ij} \;\; \forall i \in [I]\; \forall j \in [J], \\
	& \begin{bmatrix} \bm F & \bm g \\ \bm g^\top & h - 1
	\end{bmatrix} \displaystyle \preceq - \Sb^\top\bm N\Sb + \sum_{i \in [I]} \lambda_i \bm J_i - \sum_{i \in [I],j\in[J]} \bm M_{ij}(\bm\alpha_{ij},\kappa_{ij}), \gap \\
	&\begin{bmatrix}\bm F &\bm g & \bm A\\ \bm g^\top & h& \bm b^\top \\ \bm A& \bm b & \Ib \end{bmatrix} \succeq \bm 0,
	\end{array}
	\end{equation}
	where
	\begin{equation*}
	\begin{array}{clll}
	\displaystyle\Sb = \begin{bmatrix} -\bm S &\bm t \end{bmatrix} \in \RR^{J\times(K+1)},\;\; \bm J_i = \begin{bmatrix}  \bm Q_i^2 & \bm Q_i^\top \bm q_i \\ \bm q_i^\top \bm Q_i & \bm q_i^\top \bm q_i - 1 \end{bmatrix} \in \Sbb^{K+1} \;\; \forall i \in [I], \text{ and } \gap\\
	\displaystyle\bm M_{ij}(\bm\alpha,\kappa) = \begin{bmatrix}	-\frac{1}{2} \left(\bm S_{j:} \bm\alpha^\top\bm Q_i + \bm Q_i \bm \alpha\bm S_{j:}^\top\right) & \frac{1}{2} \left(t_j\bm Q_i \bm \alpha - (\bm \alpha^\top\bm q_i +\kappa)\bm S_{j:}\right) \\ \frac{1}{2} \left(t_j\bm Q_i \bm \alpha - (\bm \alpha^\top\bm q_i + \kappa)\bm S_{j:}\right)^\top & (\bm \alpha^\top\bm q_i + \kappa)t_j
	\end{bmatrix} \quad \forall i \in [I]\; \forall j \in [J].
	\end{array}
	\end{equation*}
\end{theorem}

\begin{proof}
	For the set $\Pc$, the cone $\K$ as defined as in \eqref{eq:def_cone} can be written as
	$$\K = \left\{ [\bm x; \tau] \in \RR^{K+1}: \; \tau \geq 0,\;\bm S\bm x \leq \tau \bm t,\; \norm{\bm Q_i \bm x + \tau \bm q_i}^2 \leq \tau^2 \ \forall i \in [I] \right\}.$$
	We show that the conditions \eqref{eq:suff_conds_quad} imply the conditions \eqref{eq:gc_conditions}, which proves the claim. Let 
	$$\bm P =\begin{bmatrix} \bm F&\bm g\\\bm g^\top & h - 1 \end{bmatrix}. $$
	Also, consider $[\bm x; \tau] \in \K$. From the first semidefinite inequality, we have that 
	$$\begin{bmatrix} \bm x\\ \tau \end{bmatrix}^\top \bm P \begin{bmatrix} \bm x\\ \tau \end{bmatrix} \leq \begin{bmatrix} \bm x\\ \tau \end{bmatrix}^\top \left( - \Sb^\top\bm N\Sb + \sum_{i \in [I]} \lambda_i \bm J_i - \sum_{i \in [I],j\in[J]} \bm M_{ij}(\bm\alpha_{ij})\right)\begin{bmatrix} \bm x\\ \tau \end{bmatrix}.$$	
	We show that all three terms in the expression on the right hand side are non-positive. The first term is non-positive as shown in the proof of Theorem \ref{thm:C0_polytope}. Next, observe that for all $i \in [I]$, we have that $[\bm x;\tau]^\top \bm J_i [\bm x; \tau] = \norm{\bm Q_i \bm x +\tau \bm q_i}^2 - \tau^2 \leq 0,$ since $[\bm x,\; \tau] \in \K$. Also, $$[\bm x;\tau]^\top \bm M_{ij}(\bm\alpha_{ij}) [\bm x; \tau] = (\tau t_j-\bm S_{j:}^\top\bm x)(\tau \kappa_{ij} + \bm\alpha_{ij}^\top(\bm Q_i\bm x + \tau\bm q_i)) \geq 0.$$ The previous inequality follows because both terms in the product are non-negative since $\bm S\bm x \leq \tau\bm t$ and $\tau \kappa_{ij} + \bm\alpha_{ij}^\top(\bm Q_i\bm x + \tau\bm q_i) \geq \tau \kappa_{ij} - \norm{\bm \alpha_{ij}}\norm{\bm Q_i\bm x + \tau\bm q_i} \geq \tau \kappa_{ij} - \tau \kappa_{ij} = 0.$ Hence, $[\bm x; \tau]^\top \bm P[\bm x; \tau] \leq 0\;\; \forall[\bm x;\tau] \in \K$, which implies that $\bm P \preceq_{\C(\K)}\bm 0.$	Hence the claim follows.
\end{proof}

Theorem \ref{thm:C0_quad} implies that the following SDP serves as a restriction to \eqref{eq:MVE}:
\begin{equation}
\label{eq:MVE_approx_quad}
\begin{array}{clll}
\minimize &-\log\det(\bm A)\\
\subjectto&\bm A\in \Sbb^K,\; \bm b \in \RR^K,\; \bm F\in \Sbb^K,\; \bm g\in \RR^K,\; h\in \RR,\\
& \bm N \in \RR_+^{J\times J},\; \lambda_i \geq 0\ \forall i \in [I],\;\bm\alpha_{ij}\in \RR^K,\\
& \eqref{eq:suff_conds_quad} \textnormal{ holds.}
\end{array}
\end{equation}

\begin{remark}
	The approximation discussed above is motivated by the Relaxation Linearization Technique (RLT) discussed in \cite{anstreicher2009semidefinite, sherali2013reformulation}, and SOC-RLT constraints discussed in \cite{burer2013second}.
\end{remark}

\section{Application to Distributionally Robust Optimization}
\label{sec:dro}
In this section, we demonstrate how our approximation to \eqref{eq:MVE} can be used to obtain good solutions to the two-stage DRO model with random recourse given by
\begin{equation}
\label{eq:dro_1ststage}
\begin{array}{clll}
\displaystyle \inf_{\bm x \in \X} \left\{ \bm c^\top \bm x + \sup_{\QQ\in\Qc} \EE_\QQ [\R(\bm x, \tilde{\bm \xi})] \right\},
\end{array}
\end{equation}
where 
\begin{equation}
\label{eq_dro_2ndstage}
\begin{array}{rlll}
\displaystyle\R(\bm x, \bm \xi) = \inf_{\bm y} & (\bm D\bm\xi+ \bm d)^\top \bm y\\
\st & \bm T_\ell(\bm x)^\top\bm \xi + h_\ell(\bm x) \leq \bm (\bm W_\ell\bm\xi+\bm w_\ell)^\top\bm y \quad \forall \ell \in [L].
\end{array}
\end{equation}
Here, $\bm x\in  \RR^{N_1}$ and $\bm y\in \RR^{N_2}$ represent the first- and the second-stage decision variables respectively, $\X$ is a set defined by tractable convex constraints on $\bm x$, and $\bm\xi\in \RR^K$ is the vector of uncertain parameters. Also, $\bm c\in \RR^{N_1}, \bm D\in \RR^{N_2\times K},\bm W_\ell \in \RR^{N_2\times K}, \bm d \in \RR^{N_2}$, and $\bm w_\ell \in \RR^{N_2}$ are problem parameters. The functions $\bm T_\ell:\X\rightarrow \RR^{K}$ and $h_\ell:\X\rightarrow \RR$ are affine in the input parameter. We consider the following moment-based ambiguity set: $\mathcal Q = \{ \QQ \in \Qc_0(\Xi): \EE_\QQ[\tilde{\bm \xi}] = \bm \mu,\, \EE_\QQ[\tilde{\bm \xi}\tilde{\bm \xi}^\top] \preceq \bm \Sigma \}$, where $\Xi = \{\bm \xi\in \RR^K: \bm S\bm \xi \leq \bm t\} $ is the bounded support set, and $\Qc_0(\Xi)$ is the set of all probability measures supported on $\Xi$. The objective function minimizes the sum of the first-stage and the expected recourse cost, where the expectation is taken with respect to the worst case distribution among those in the ambiguity set $\Qc$. The results presented here can be extended to other types of ambiguity sets, including the simpler case where $\Qc = \Qc_0(\Xi)$ (\ie, robust optimization) \cite{bertsimas2016multistage, xu2018copositive}, the more sophisticated data-driven Wasserstein ambiguity set \cite{hanasusanto2018conic}, and to the classical stochastic programming setting.

The problem \eqref{eq:dro_1ststage} can be written equivalently as:
\begin{equation}
\label{eq:dro_semiinf}
\begin{array}{clll}
\displaystyle \inf_{\bm x, \bm y(\cdot)} & \displaystyle \bm c^\top \bm x + \sup_{\QQ\in\Qc} \EE_\QQ [(\bm D\bm\xi+ \bm d)^\top \bm y(\bm\xi)] ,\\
\st & \bm x \in \X,\\
 & \bm T_\ell(\bm x)^\top\bm \xi + h_\ell(\bm x) \leq \bm (\bm W_\ell\bm\xi+\bm w_\ell)^\top\bm y(\bm \xi) \quad \forall \bm\xi \in \Xi, \forall \ell \in [L],
\end{array}
\end{equation}
where the second-stage decision variable $\bm y$ is a function of the uncertain parameters $\bm\xi$. The problem \eqref{eq:dro_semiinf} is difficult to solve. To generate a tractable approximation to \eqref{eq:dro_semiinf}, we explore the use of piecewise-linear decision (PLD) rules. Specifically, we partition $\Xi$ into regions $\Xi_1,\ldots, \Xi_J$, and restrict $\bm y(\cdot)$ to be of the form $\bm y(\bm\xi) = \bm Y_j \bm \xi + \bm y_j$ if ${\bm\xi \in \Xi_j}$, where $\bm Y_j\in \RR^{N_2\times K}$ and $\bm y_j\in \RR^{N_2}$.  For constructing the partitions, we start with a set of \emph{constructor points} $\{\bm \xi_j\}_{j\in [J]}$ in $\Xi$. Then, we define the partition $\Xi_j$ to be the set of all points in $\Xi$ which are closer to $\bm\xi_j$ than any other constructor point. In other words,
\begin{equation*}
\begin{array}{cll}
\Xi_j & = \{ \bm \xi \in \RR^K : \bm S\bm \xi \leq \bm t,\; \norm{\bm \xi - \bm \xi_j} \leq \norm{\bm \xi- \bm \xi_i}\; \forall i \in [J], i \neq j \}\\
& = \{ \bm \xi \in \RR^K : \bm S\bm \xi \leq \bm t,\; 2(\bm\xi_i-\bm\xi_j)^\top \bm\xi \leq \bm\xi_i^\top\bm\xi_i - \bm\xi_j^\top\bm\xi_j \; \forall i \in [J], i \neq j \}\\
& = \{ \bm \xi \in \RR^K : \bm S_j \bm \xi \leq \bm t_j \},
\end{array}
\end{equation*}
where the matrix $\bm S_j \in \RR^{L_j\times K}$ and the vector $\bm t_j\in \RR^{L_j}$ are formed by combining the linear constraints in the definition of $\Xi_j$. These partitions are known as \emph{Voronoi regions}. 

Because of random recourse (\ie, uncertainty in the coefficients of $\bm y(\cdot)$), finding the optimal PLD rule is NP-hard, even if there is only one piece \cite{BGGN04:LDR}. However, we can approximate the problem of finding the optimal PLD rule using the \sproc. {However we need a quadratic constraint in the definition of $\Xi_j$ for an effective application of \sproc (see Remark \ref{rem:redundant_ellipsoid} in Appendix \ref{appendix:sproc}). To this end, let $\E(\bm A_j,\bm b_j)$ be an ellipsoid that contains $\Xi_j$. Since $\Xi_j$ is a polytope, we can exploit the results developed in Section \ref{sec:approx_polytope} to find $\E(\bm A_j,\bm b_j)$. We can write $\Xi_j$ equivalently as $\Xi_j = \{ \bm \xi \in \RR^K : \bm S_j \bm \xi \leq \bm t_j, \norm{\bm A_j \bm \xi + \bm b_j}^2 \leq 1 \}$.} We illustrate the procedure of partitioning and covering with ellipsoids in Figure \ref{fig:voronoi}.

\begin{figure}
	\vspace{-1cm}
	\begin{center}
		\begin{subfigure}{.1\textwidth}
		\end{subfigure}\hfill
		\begin{subfigure}{.4\textwidth}
			\includegraphics[width=2in]{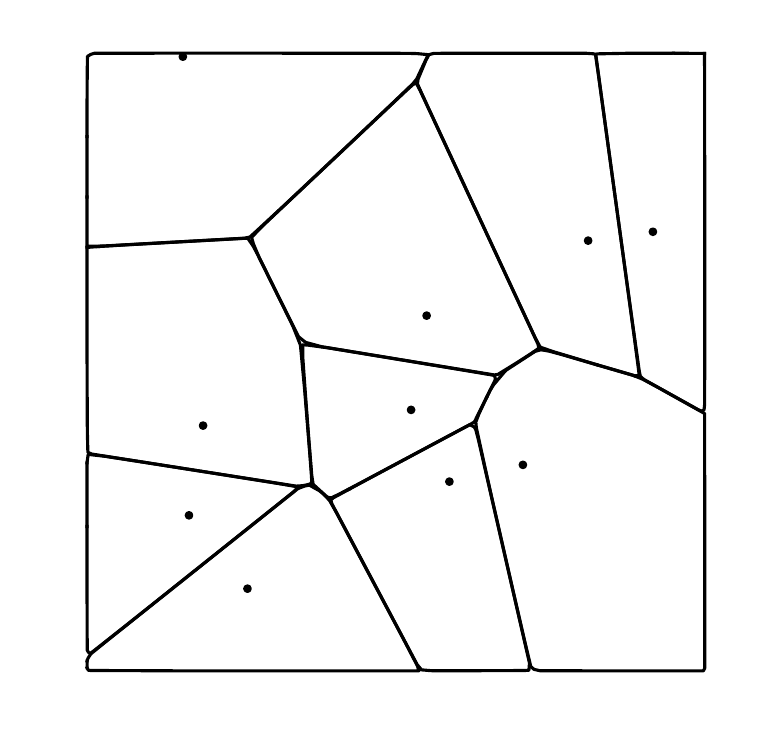}
		\end{subfigure}\hfill
		\begin{subfigure}{.4\textwidth}
			\includegraphics[width=2.4in]{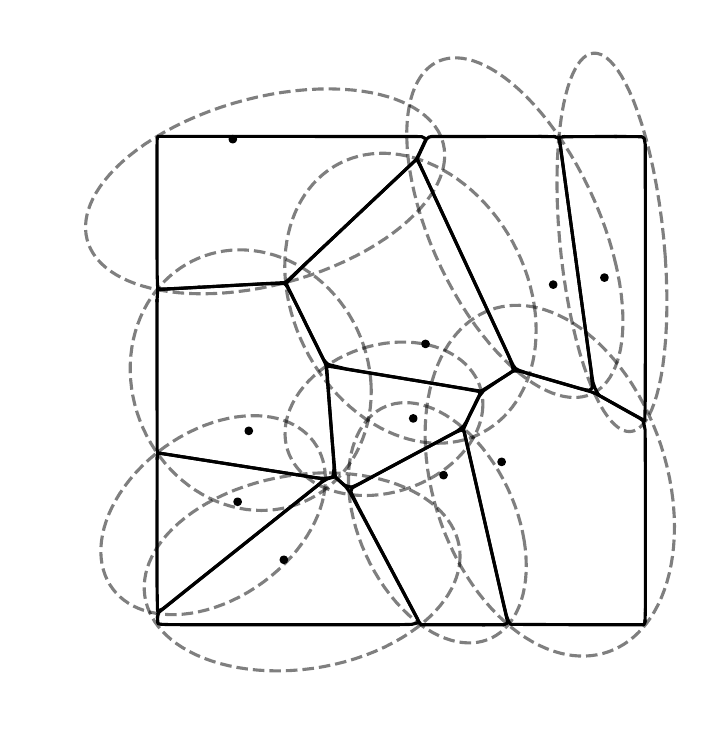}
		\end{subfigure}\hfill
		\begin{subfigure}{.1\textwidth}
		\end{subfigure}\hfill
	\end{center}
	\vspace{-5mm}
	\caption{Voronoi Regions: The outer square represents the support set, and the black dots are the constructor points. The points are used to construct partitions, and an ellipsoid containing each partition is found by solving the SDP \eqref{eq:MVE_approx_polytope}. \label{fig:voronoi}}
\end{figure}

In the next proposition, we derive a tractable SDP that generates a feasible PLD rule. The optimal value of the resulting SDP approximation provides an upper bound to the optimal value of~\eqref{eq:dro_semiinf}. In Example \ref{example:dro2} presented after the proposition, we demonstrate that the size of the bounding ellipsoids $\E(\bm A_j,\bm b_j)$ can drastically impact the upper bound provided by the SDP approximation; in particular, the tighter the ellipsoids, the better the upper bound.

\begin{proposition}
\label{prop:dro}
Consider the following SDP:
\begin{equation}
\label{eq:dro_pldr}
\begin{array}{rlll}
\inf & \bm c^\top \bm x + \alpha + \bm \beta^\top \bm \mu + \tr(\bm \Gamma \bm\Sigma)\\
\st & \bm x \in \X,\; \bm\Gamma\in \Sbb_+^K,\; \bm \beta\in \RR^K,\;\bm\alpha\in \RR,\;\\
& \bm Y_j \in \RR^{N_2\times K},\; \bm y_j \in \RR^{N_2},\; \bm\gamma_j\in \RR_+^{L_j},\; \delta_j \in \RR_+ \quad \forall j \in [J],\\
& \lambda_{j\ell} \in \RR_+,\; \bm \rho_{j\ell} \in \RR_+^{L_j} \quad \forall j \in [J]\; \forall \ell \in [L],\vspace{1mm} \\
& \displaystyle \begin{bmatrix} \bm\Gamma & \frac{1}{2}\bm\beta\\ \frac{1}{2}\bm \beta^\top & \alpha \end{bmatrix} - \begin{bmatrix} \frac{1}{2}(\bm D^\top \bm Y_j + \bm Y_j^\top \bm D) & \frac{1}{2}(\bm D^\top \bm y_j + \bm Y_j^\top \bm d ) \\ \frac{1}{2}( \bm D^\top \bm y_j + \bm Y_j^\top \bm d  )^\top &  \bm d^\top \bm y_j  \end{bmatrix} + \bm P_j(\bm \gamma_j) + \delta_{j} \bm J_j \succeq \bm 0\quad \forall j \in [J], \vspace{1mm}\\
&\displaystyle \begin{bmatrix} \frac{1}{2}(\bm W_\ell^\top \bm Y_j + \bm Y_j^\top \bm W_\ell) & \frac{1}{2}(\bm W_\ell^\top \bm y_j + \bm Y_j^\top \bm w_\ell ) \\ \frac{1}{2}( \bm W_\ell^\top \bm y_j + \bm Y_j^\top \bm w_\ell  )^\top &  \bm w_\ell^\top \bm y_j  \end{bmatrix} - \bm M_\ell(\bm x) + \bm P_j(\bm \rho_{j\ell}) + \lambda_{j\ell} \bm J_j \succeq \bm 0\quad \forall j \in [J]\; \forall \ell \in [L],
\end{array}
\end{equation}
where
$$\bm M_\ell(\bm x) = \begin{bmatrix} \bm 0 & \frac{1}{2} \bm T_\ell(\bm x) \\ \frac{1}{2} \bm T_\ell(\bm x)^\top & h_\ell(\bm x) \end{bmatrix},\quad \bm P_j(\bm\rho) = \begin{bmatrix} \bm 0 & \frac{1}{2}\bm S_j^\top \bm \rho \\ \frac{1}{2}\bm \rho^\top \bm S_j & - \bm t_j^\top \bm \rho \end{bmatrix},\quad \mbox{and}\quad \bm J_j = \begin{bmatrix}  \bm A_j^2 & \bm A_j^\top \bm b_j \\ \bm b_j^\top \bm A_j & \bm b_j^\top \bm b_j - 1 \end{bmatrix}.$$
Let $\bm y(\bm \xi) = \bm Y_j\bm \xi + \bm y_j $ if $\bm \xi \in \Xi_j$. Then, $(\bm x, \bm y(\cdot))$ provides a feasible solution to \eqref{eq:dro_semiinf}. Furthermore, the optimal value of \eqref{eq:dro_pldr} provides an upper bound to the optimal value of \eqref{eq:dro_semiinf}.
\end{proposition}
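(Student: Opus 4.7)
The plan is to verify the two claims separately: first the feasibility of the PLD rule $\bm y(\bm\xi)=\bm Y_{j(\bm\xi)}\bm\xi+\bm y_{j(\bm\xi)}$ for the constraint system of \eqref{eq:dro_semiinf}, then the upper bound on the objective. Both claims are obtained by reducing semi-infinite quadratic constraints over the Voronoi cells $\Xi_j$ to tractable semidefinite inequalities via the \sproc, where the crucial role of the redundant ellipsoidal description $\Xi_j=\{\bm\xi:\bm S_j\bm\xi\leq\bm t_j,\;\|\bm A_j\bm\xi+\bm b_j\|^2\leq 1\}$ is to supply the quadratic multiplier that makes the \sproc tight enough for the bound to be meaningful.

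For feasibility, I would fix an index $j\in[J]$, $\ell\in[L]$ and substitute $\bm y(\bm\xi)=\bm Y_j\bm\xi+\bm y_j$ in the constraint of \eqref{eq:dro_semiinf}. Rearranging, this constraint is equivalent to
\begin{equation*}
[\bm\xi;1]^\top\!\left(\begin{bmatrix}\tfrac12(\bm W_\ell^\top\bm Y_j+\bm Y_j^\top\bm W_\ell)&\tfrac12(\bm W_\ell^\top\bm y_j+\bm Y_j^\top\bm w_\ell)\\ \tfrac12(\bm W_\ell^\top\bm y_j+\bm Y_j^\top\bm w_\ell)^\top&\bm w_\ell^\top\bm y_j\end{bmatrix}-\bm M_\ell(\bm x)\right)[\bm\xi;1]\geq 0\quad\forall\bm\xi\in\Xi_j.
\end{equation*}
Applying the \sproc with a nonnegative multiplier vector $\bm\rho_{j\ell}$ for the affine constraints $\bm S_j\bm\xi\leq\bm t_j$ (which contributes the $\bm P_j(\bm\rho_{j\ell})$ term) and a nonnegative multiplier $\lambda_{j\ell}$ for $\|\bm A_j\bm\xi+\bm b_j\|^2\leq 1$ (which contributes $\lambda_{j\ell}\bm J_j$), the resulting sufficient condition is exactly the second matrix inequality of \eqref{eq:dro_pldr}. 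Since $\Xi=\bigcup_j\Xi_j$, this ensures feasibility of $\bm y(\cdot)$.

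For the upper bound, I would use the standard moment-dual reformulation: the worst-case expectation can be bounded above by $\alpha+\bm\beta^\top\bm\mu+\tr(\bm\Gamma\bm\Sigma)$ with $\bm\Gamma\succeq 0$ provided that, for every $\bm\xi\in\Xi$,
\begin{equation*}
[\bm\xi;1]^\top\!\begin{bmatrix}\bm\Gamma&\tfrac12\bm\beta\\ \tfrac12\bm\beta^\top&\alpha\end{bmatrix}\![\bm\xi;1]\;\geq\;(\bm D\bm\xi+\bm d)^\top\bm y(\bm\xi).
\end{equation*}
Splitting this over the pieces $\Xi_j$, substituting $\bm y(\bm\xi)=\bm Y_j\bm\xi+\bm y_j$, and symmetrizing the bilinear form yields a quadratic inequality in $\bm\xi$ that must hold on $\Xi_j$. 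The \sproc with multipliers $\bm\gamma_j\geq\bm 0$ for the linear facets and $\delta_j\geq 0$ for the ellipsoidal constraint $\|\bm A_j\bm\xi+\bm b_j\|^2\leq 1$ turns this into the first matrix inequality of \eqref{eq:dro_pldr}. Duality with the moment ambiguity set then gives $\sup_{\QQ\in\Qc}\EE_\QQ[\R(\bm x,\tilde{\bm\xi})]\leq\sup_\QQ\EE_\QQ[(\bm D\bm\xi+\bm d)^\top\bm y(\bm\xi)]\leq\alpha+\bm\beta^\top\bm\mu+\tr(\bm\Gamma\bm\Sigma)$, so adding $\bm c^\top\bm x$ shows that every feasible solution of \eqref{eq:dro_pldr} yields an objective value of \eqref{eq:dro_semiinf} that is no larger.

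The main obstacle is bookkeeping rather than depth: one must carefully symmetrize the bilinear forms $\bm\xi^\top\bm W_\ell^\top\bm Y_j\bm\xi$ and $\bm\xi^\top\bm D^\top\bm Y_j\bm\xi$ so that the resulting matrices are in $\Sbb^{K+1}$, correctly introduce the redundant quadratic constraint inside each $\Xi_j$ so that the \sproc gives a useful restriction (without it, the \sproc over a polyhedral set would again be equivalent to the SMVIE-type bound, as in Proposition~\ref{prop:sproc_improvement}), and verify that the affine dependence of $\bm T_\ell(\bm x)$ and $h_\ell(\bm x)$ on $\bm x$ keeps the SDP jointly convex in $(\bm x,\bm Y_j,\bm y_j,\bm\gamma_j,\lambda_{j\ell},\bm\rho_{j\ell},\delta_j,\bm\alpha,\bm\beta,\bm\Gamma)$. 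Once these are handled, both conclusions of the proposition follow directly from assembling the per-cell \sproc certificates and invoking moment duality.
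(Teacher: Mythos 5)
Your proposal is correct and follows essentially the same route as the paper's own proof: weak duality for the moment problem bounds the worst-case expectation by $\alpha+\bm\beta^\top\bm\mu+\tr(\bm\Gamma\bm\Sigma)$ subject to a per-cell quadratic minorization, and the \sproc applied on each $\Xi_j$ (with the polyhedral multipliers and the multiplier on the redundant ellipsoidal constraint) yields exactly the two families of semidefinite inequalities in \eqref{eq:dro_pldr}. The only difference is cosmetic ordering of the two claims, so no further comparison is needed.
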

\begin{proof}
	See e-companion.
\end{proof}

%\begin{example}
%	\label{example:dro1}
%	Let $\Xi = \{ \bm\xi \in \RR^K: \bm S\bm\xi \leq \bm t \}$ be a set contained in the unit ball, \ie, $\Xi \subset \{ \bm \xi\in \RR^K : \norm{\bm\xi} \leq 1 \} $. Furthermore assume that $\Xi$ contains the origin.  Consider the following special case of \eqref{eq:dro_semiinf}:
%	\begin{equation}
%	\label{eq:dro_ex1}
%	\begin{array}{cll}
%	\displaystyle \inf_{\tau,\bm y(\cdot)} & \tau\\
%	\st & \left. \begin{array}{l}
%	\mspace{-5mu} \bm \xi^\top \bm y(\bm \xi) \leq \tau,\\
%	\bm y(\bm \xi) = \bm \xi
%	\end{array} \mspace{20mu} \right \rbrace &\forall \bm\xi \in \Xi.
%	\end{array}
%	\end{equation}
%	\clb{This forms a special case of.} For $J=1$ partition and $r \geq 1$, let $z(r)$ be the optimal value of the resulting SDP approximation when the ellipsoid $\E(r) = \{ \bm\xi \in \RR^K : \norm{\bm\xi} \leq r \}$ is used as the bounding ellipsoid. In Appendix \ref{appendix:dro_ex1}, we present the SDP approximation and show that $z(r) = r $. Therefore, the lower the radius of the ellipsoid, the better the upper bound $z(r)$. In fact, the upper bound is $\infty$ if we ignore the bounding ellipsoid.
%\end{example}

\begin{example}
	\label{example:dro2}
	Consider the following special case of \eqref{eq:dro_semiinf}:
	\begin{equation}
	\begin{array}{rlll}
	\label{eq:dro_ex2}
	\displaystyle z = \inf_{x,\bm y(\cdot)} & x\\
	\st & 1 \leq (\bm \xi+\ee)^\top \bm y(\bm \xi) \leq x \quad \forall \bm \xi \in \Xi,
	\end{array}
	\end{equation}
	where $\Xi = \{ \bm\xi \in \RR^K: \bm 0 \leq \bm\xi \leq \ee \}$ is the unit hypercube, and $J=1$. {This problem is a special case of \eqref{eq:dro_semiinf} with $L = 2, \ \bm D = \bm 0,\ \bm d = \bm 0,\ \bm W_1 = \Ib,\ \bm w_1 = \ee,\ \bm T_1(x) = \bm 0,\ h_1(x) = 1,\ \bm W_2 = -\Ib,\ \bm w_2 = -\ee,\ \bm T_2(x) = \bm 0,\textnormal{ and } h_2(x) = -x.$} The true optimal value is $z = 1$, which is obtained by the non-linear decision function $\bm y(\bm \xi) = (\bm \xi+\ee)/\norm{\bm \xi+\ee}^2$. In this case, $\Emve = \{ \bm \xi\in \RR^K : \norm{\bm\xi -\ee/2}^2 \leq N/4 \}$. For $s \geq 0$, let $z(s)$ be the upper bound generated by the SDP approximation when $\{ \bm\xi \in \RR^K : \norm{\bm\xi -\ee/2}^2 \leq N(1 + s)/4 \}$ is used as the bounding ellipsoid. In the e-companion, we show that
	$$z(s) = \begin{cases}
	9/(8-s) &\textnormal{if } 0\leq s \leq 2,\\
	1+s/4 &\textnormal{if } 2\leq s \leq 4,\\
	2 & \textnormal{if } 4\leq s.
	\end{cases}$$
	Therefore, the linear decision rule obtained with $\Emve$ generates an objective value of $z(0) = 9/8 = 1.125$. {The objective value $z(s)$ increases as the size of the ellipsoid increases. The case when $s$ approaches $\infty$ corresponds to dropping the ellipsoidal constraint; in that case, we obtain an objective value of $\lim_{s \to \infty} z(s) = 2$. Hence, ignoring the ellipsoidal constraint can increase the suboptimality of the decision rules approximation from $12.5\%$ to $100\%$.}
\end{example}

Example \ref{example:dro2} demonstrates the importance of generating good outer ellipsoids. We further elaborate on this point in Section \ref{sec:inventory}, where we perform experiments on randomly generated instances of an inventory management model. We note that the task of finding the outer ellipsoids $\E(\bm A_j,\bm b_j)$ can be parallelized, which leads to a substantial reduction in the computation time.
{
	\begin{remark}[Two-Stage Stochastic Programming]
		\label{ref:two_stage}
		In the classical stochastic programming setting, the random parameters  $\tilde{\bm \xi}$ are assumed to be governed by a known distribution $\PP$. The semi-infinite constraints in~$\eqref{eq:dro_semiinf}$ remain unchanged and can be approximated   in the  same manner using the \sproc. On the other hand, the worst-case expectation in the objective function of \eqref{eq:dro_semiinf} reduces to the expectation $\EE_\PP [(\bm D\tilde{\bm\xi}+ \bm d)^\top \bm y(\tilde{\bm\xi})]$. Applying the law of total expectation and employing the proposed PLD rules, we can reformulate the  expectation as 
		\begin{equation*}
		\begin{array}{rl}
		&\displaystyle\sum_{j\in[J]}	\PP(\tilde{\bm\xi}\in\Xi_j)\;\EE_\PP \left[(\bm D\tilde{\bm\xi}+ \bm d)^\top \bm y(\tilde{\bm\xi})~\big|~\tilde{\bm\xi}\in\Xi_j\right]	\\
		=&\displaystyle\sum_{j\in[J]}	\PP(\tilde{\bm\xi}\in\Xi_j)\;\EE_\PP \left[(\bm D\tilde{\bm\xi}+ \bm d)^\top (\bm Y_j \tilde{\bm\xi}+ \bm y_j) ~\big|~\tilde{\bm\xi}\in\Xi_j\right]\\
		=&\displaystyle\sum_{j\in[J]}	\PP(\tilde{\bm\xi}\in\Xi_j)\left(\tr\left(\bm D^\top\bm Y_j\EE_\PP[\tilde{\bm\xi}\tilde{\bm\xi}^\top|\tilde{\bm\xi}\in\Xi_j ]\right)+(\bm y_j^\top\bm D+\bm d^\top\bm Y_j)\EE_\PP[\tilde{\bm\xi}|\tilde{\bm\xi}\in\Xi_j ] + \bm d^\top\bm y_j\right).	
		\end{array}
		\end{equation*}
		This expression is affine in the decision variables $\bm Y_j$ and $\bm y_j$, $j\in[J]$. Note that the partition probabilities $\PP(\tilde{\bm\xi}\in\Xi_j)$, $j\in[J]$, and conditional moments $\EE_\PP[\tilde{\bm\xi}|\tilde{\bm\xi}\in\Xi_j ]$ and $\EE_\PP[\tilde{\bm\xi}\tilde{\bm\xi}^\top|\tilde{\bm\xi}\in\Xi_j ]$, $j\in[J]$, can be estimated using the Monte Carlo sampling method. 
	\end{remark}
}

\section{Numerical Experiments}
\label{sec:experiments}

In this section, we present numerical experiments that demonstrate the improved performance of our scheme for approximating \eqref{eq:MVE} over the existing methods. First, we show that our approach outperforms the existing approaches in terms of solution quality and computational time on randomly generated polytopes. Second, we demonstrate the efficacy of our method in generating quality solutions for a distributionally robust inventory management model.  All optimization problems are solved using the YALMIP interface \cite{yalmip} on a 16-core 3.4 GHz computer with 32 GB RAM. We use MOSEK 8.1 to solve SDPs and CPLEX 12.8 to solve non-convex quadratic programs to optimality. 

\subsection{Random Polytopes}
\label{sec:random_polytopes}
Here, we compare our method of approximating \eqref{eq:MVE} with \begin{inparaenum}[(i)] \item the constraint generation approach \cite{gotoh2006minimal}, \item the SMVIE approach, and \item the method using sufficient conditions proposed by Kellner, Theobald, and Trabandt  \cite{kellner2013containment}. \end{inparaenum} We refer to the last method as the KTT approach, and denote the corresponding ellipsoid by $\Ektt$ (see Appendix \ref{appendix:ktt} for details on the formulation).

For our experiments, we generate polytopes randomly as follows. We start with the hyper-rectangle $\{\bm x \in \RR^K :\ \bm 0\leq \bm x \leq \ee \}$ with center $\mathbf c = \ee/2$. Then we add $M$ linear inequalities in the following way. For $j \in [M]$, we generate a vector $\bm s_j\in \RR^K$ uniformly distributed on the surface of the unit hypersphere. We generate a distance $r_j$ uniformly at random from the interval $[-\norm{\bm s_j}_1/2,\norm{\bm s_j}_1/2]$, and add the constraint $\bm s_j^\top (\bm x - \mathbf c) \leq r_j$ if $r_j>0$ and $\bm s_j^\top (\bm x - \mathbf c) \geq r_j$ if $r_j\leq 0$. Choosing $r_j$ from the specified interval leads to a constraint that cuts the hyper-rectangle (\ie, the constraint is not redundant). Also, the construction ensures that the polytope is non-empty since $\mathbf c$ satisfies all the constraints.

For several values of $K$, we solve the problem exactly and apply each approximation method on $50$ randomly generated instances for $M = K,\ 2K,\ 3K$. We report the suboptimality results of the three approximation methods in Table~\ref{table:polytope_subopt}. For higher values of $K$, for which we were not able to solve the problem exactly within $30$ minutes, we report the suboptimality of the radius of $\Esmvie$ and $\Ektt$ with respect to $\Ecop$ in Table~\ref{table:subopt_big}. Finally, the solution times of different methods are reported in Table \ref{table:polytope_times}. We do not report the solution time of the SMVIE approach. Even for the largest problem size that we solved, the SMVIE approach produces solutions in less than $2$ seconds, dominating every other approach.

It can be observed that the radius (and therefore, volume) of $\Ecop$ is significantly lower than that of $\Esmvie$. Furthermore, the suboptimality of the radius of $\Esmvie$ relative to that of $\Ecop$ increases with the dimension $K$ (from $246\%$ for $K=15$ to $481\%$ for $K=40$). This is perhaps because the scale factor of $K$ becomes very conservative for higher values of $K$. This increase in solution quality of $\Ecop$ comes at the cost of higher solution times compared to that of finding $\Esmvie$.

We also observe that the radius of $\Ecop$ is slightly better than that of $\Ektt$; the solution time, however, is significantly lower ($1$-$2$ orders of magnitude). As an example, for $K=M=30$, the KTT approach does not provide solutions within $30$ minutes, whereas our method generates an solution in $13.7$ seconds on average.

Finally, we observe that for small problem instances, our method finds a solution much faster than solving the problem to optimality. For higher dimensional problems ($K>15$), where solving the problem exactly becomes intractable, our approximation continues to provide ellipsoids of lower volume than the other approximation methods.

\begin{table}
	\centering
%	\resizebox{\textwidth}{!}{
		\begin{tabular}{c|rrr|rrr|rrr}
			\multicolumn{1}{c}{} &
			\multicolumn{3}{c}{$M=K$} & \multicolumn{3}{c}{$M=2K$} &  	\multicolumn{3}{c}{$M=3K$}\\ \hline
			$K$  & Copos  &  KTT   & SMVIE  & Copos  &  KTT   & SMVIE  & Copos  &  KTT   & SMVIE \\
			[0.0mm] \hline
			$2$  & 3.41\% & 4.68\% & 34.3\% & 5.20\% & 6.48\% & 32.8\% & 5.33\% & 6.63\% & 31.9\% \\
			$5$  & 4.88\% & 7.02\% & 105\% & 9.92\% & 13.16\% & 91.9\% & 13.2\% & 16.4\% & 93.7\% \\
			$10$ & 2.53\% & 3.72\% & 188\% & 7.48\% & 9.51\% & 176\% & 13.6\% & 16.9\% & 164\% \\ 
			$15$ & 1.29\% & 1.84\% & 250\% & 5.57\% & 7.16\% & 230\% & N/A & N/A & N/A\\
			\hline\hline
	\end{tabular}%}
	\caption[Suboptimality for minimum volume ellipsoids]{Random Polytopes: Mean suboptimality of the radii of $\Ecop$ (`Copos'), $\Ektt$ ('KTT'), and $\Esmvie$ (`SMVIE') for different problem sizes. We use `N/A' when the problem cannot be solved to optimality within $30$ minutes.
		\label{table:polytope_subopt}}
\end{table}

\begin{table}
	\centering
	\begin{tabular}{c|rr|rr|rr}
		\multicolumn{1}{c}{} &
		\multicolumn{2}{c}{$M=K$} & \multicolumn{2}{c}{$M=2K$} &  	\multicolumn{2}{c}{$M=3K$}\\ \hline
		$K$  &  KTT   & SMVIE  &  KTT   & SMVIE  &  KTT   & SMVIE \\
		[0.0mm] \hline
		$15$  & 0.54\% & 246\% & 1.50\% & 212\% &  2.07\% & 191\% \\
		$20$  & 0.30\% & 310\% & 1.01\% & 268\% & 1.65\% & 245\%  \\
		$25$  & 0.28\% & 357\% & 0.66\% & 318\% &   --   & 292\% \\ 
		$30$   &   --  & 401\% &   --   & 364\% &   --   & 329\% \\ 
		$35$   &   --  & 440\% &   --   & 405\% &   --   & 372\% \\ 
		$40$   &   --  & 481\% &   --   & 447\% &   --   & 414\% \\ 
		\hline\hline
	\end{tabular}
	\caption[Relative suboptimality for minimum volume ellipsoids]{Random Polytopes: Mean suboptimality of the radii of $\E_{\textnormal{ktt}}$ (`KTT') and $\Esmvie$ (`SMVIE') relative to $\Ecop$ for the cases which could not be solved to optimality within $30$ minutes. We use ``--'' for the cases when the KTT approach does not provide a solution within $30$ minutes.
		\label{table:subopt_big}}
\end{table}

\begin{table}
	\centering
	\begin{tabular}{c|rrr|rrr|rrr}
		\multicolumn{1}{c}{} &
		\multicolumn{3}{c}{$M=K$} & \multicolumn{3}{c}{$M=2K$} &  	\multicolumn{3}{c}{$M=3K$}\\ \hline
		$K$  & Exact & Copos  & KTT  & Exac & Copos &  KTT  & Exact & Copos &  KTT  \\[0.0mm] \hline
		$2$  & 1.52 & 0.004  & 0.011 & 1.53 & 0.005  & 0.027 & 1.69 &  0.005 & 0.059 \\
		$5$  & 8.56 & 0.014  & 0.036 & 9.13 & 0.023  & 0.073 & 9.59 &  0.050 & 0.096 \\
		$10$ & 72.6 & 0.106  & 0.925 & 81.7 & 0.290  & 2.09 & 133 &  0.754 & 3.78 \\
		$15$ & 406  & 0.542  & 10.0  & 1191 & 1.82   & 25.8 & -- & 5.21 & 49.7 \\
		$20$ & -- & 2.01 & 73.2 &-- &7.60 &210 &-- & 22.2 & 438 \\
		$25$ & -- & 5.65 & 368 &-- & 22.8 & 1067 &--& 68.0 & -- \\
		$30$ & -- & 13.7 & -- &-- & 54.7 & -- &--& 207 & -- \\
		$35$ & -- & 28.8 & -- &-- & 133 & -- &--& 492 & -- \\
		$40$ & -- & 53.2 & -- &-- & 302 & -- &--& 1155 & -- \\
		\hline\hline
	\end{tabular}
	\caption[Solution times for minimum volume ellipsoids]{Random Polytopes: Mean solution times (in seconds) of the exact method (`Exact'), our proposed method (`Copos'), and the KTT approach (`KTT') for different problem sizes. We use ``--'' when the corresponding method does not provide a solution within $30$ minutes. \label{table:polytope_times}}
\end{table}

\subsection{Risk-Averse Inventory Management}
\label{sec:inventory}

Next, we consider an inventory management problem, where we decide the purchase amount of $N$ products before observing their demands. We incur a holding cost if we purchase more than  the demand, and a stockout cost if we purchase less than the demand. We assume that the demands and the stockout costs are random. The objective is to minimize the worst-case conditional value at risk (CVaR) \cite{RU00:CVaR, zhu2009worst, natarajan2009constructing} of the total cost. We can write the model as follows:

\begin{equation*}
\begin{array}{cll}
\displaystyle \minimize &\displaystyle \sup_{\QQ \in \mathcal{Q}} \QQ\mbox{-}\textnormal{CVaR}_\epsilon [\mathcal R(\bm x,\tilde{\bm \xi}, \tilde{\bm s})] \\
\subjectto & \bm x \in \RR^N,\; \bm x \geq \bm 0,\; \ee^\top \bm x \leq B,
\end{array}
\end{equation*}
where
\begin{equation*}
\begin{array}{rll}
\displaystyle \mathcal R(\bm x, \bm \xi, \bm s) = \inf & \bm g^\top \bm y_1 + \bm s^\top \bm y_2\\
\st & \bm y_1\in \RR_+^N,\bm y_2\in\RR_+^N,\\
& \bm y_1 \geq \bm x - \bm\xi,\;\; \displaystyle \bm y_2 \geq \bm\xi - \bm x.
\end{array}
\end{equation*}
Here, the variables $\bm x$, $\bm y_1$ and $\bm y_2$ represent the vector of purchase decisions, excess amounts and shortfall amounts, respectively. The vector $\bm g\in \RR^N$ represents the known holding costs, and $B$ denotes budget on the total purchase amount. Also, $\bm \xi\in \RR^N$ and $\bm s\in \RR^N$ are random parameters which represent the vectors of demand and stock-out costs respectively. The ambiguity set $\Qc$ is as described in Section \ref{sec:dro}. By employing the definition of CVaR, it can be shown that the above problem is equivalent to
\begin{equation}
\label{eq:inventory}
\begin{array}{cll}
\displaystyle \minimize & \displaystyle \kappa + \frac{1}{\epsilon} \sup_{\QQ \in \Qc} \EE_\QQ [\tau(\tilde{\bm \xi},\tilde{\bm s})] \gap\\
\subjectto & \ \kappa \in \RR,\; \bm x \in \RR^N,\; \bm x \geq \bm 0,\; \mathbf e^\top \bm x \leq B,\\
& \displaystyle \left. \begin{array}{l}
\mspace{-6mu} \tau(\bm\xi,\bm s) \geq 0,\;\bm y_1(\bm\xi,\bm s) \geq \bm 0,\; \bm y_2(\bm\xi,\bm s) \geq \bm 0,\\
\mspace{-6mu} \tau(\bm\xi,\bm s) \geq \bm g^\top \bm y_1(\bm\xi,\bm s) + {\bm s}^\top \bm y_2(\bm\xi,\bm s) - \kappa, \\
\mspace{-6mu} \bm y_1(\bm\xi,\bm s) \geq \bm x - \bm\xi,\;\; \bm y_2(\bm\xi,\bm s) \geq \bm\xi - \bm x
\end{array} \mspace{50mu} \right \rbrace &\forall (\bm\xi,\bm s) \in \Xi,
\end{array}
\end{equation}
which is of the form \eqref{eq:dro_semiinf} \cite{hanasusanto2015distributionally, shapiro2002minimax}.

We generate the parameters for this problem as follows. We use $N = 7$ products, which leads to $2N = 14$ random parameters. We choose $\Xi = \{ [\bm\xi;\bm s] : \bm \xi_l \leq \bm\xi\leq \bm \xi_u, \bm s_l \leq \bm s \leq \bm s_u \}$, and $\epsilon = 5\%$. We partition $\Xi$ into $J=4$ regions, and select the constructor points $\{ [\bm\xi_j;\bm s_j] \}_{j\in [J]}$ by sampling uniformly at random from $\Xi$. We choose $B = 30$, $\bm \xi_l = \bm 0$, $\bm\xi_u = 10\ee$, $\bm s_l = 8\ee$, $\bm s_u = 12\ee$. For constructing the ambiguity set, we use $\bm\mu = [\bm \mu_{\bm\xi};\bm \mu_{\bm s}] \in \RR^{2N}$, where $\bm \mu_{\bm s} = 10\ee$ and every element of $\bm \mu_{\bm\xi}$ is generated uniformly from the interval $[0,2]$. We select a random correlation matrix $\bm C\in \Sbb_+^{2N}$ with the MATLAB command ``$\verb|gallery(`randcorr',2*N)|$'', and set $\bm\Sigma = \Diag(\bm\sigma)\bm C\Diag(\bm \sigma) + \bm \mu \bm \mu^\top$, where $\bm\sigma = [\bm \sigma_{\bm\xi};\bm \sigma_{\bm s}] \in \RR^{2N}$, $\bm \sigma_{\bm s} = \ee/2$ and $\bm \sigma_{\bm\xi} = \bm \mu_{\bm\xi}/4$.

We approximate \eqref{eq:inventory} using our proposed SDP \eqref{eq:dro_pldr}, where the ellipsoids $\E(\bm A_j,\bm b_j)$ are generated using the SDP \eqref{eq:MVE_approx_polytope} developed in Section \ref{sec:approx_polytope}. We refer to this approach here as `PWL'. We compare the solution time and quality of the PWL approach with those of the following schemes:
\begin{itemize}
	\item Piecewise static decision rules (`PWS') \cite{bertsimas2016multistage}: {Here, the second stage decision variables are restricted to be constant within each partition, \ie, $\bm Y_j = \bm 0$ in Proposition \ref{prop:dro}. This approach leads to a tractable approximation, and, to the best of our knowledge, is state-of-the-art for solving DRO problems with random recourse.}
	\item Linear decision rules (`LDR'): This is similar to PWL except we do not partition the support set (\ie, $J = 1$). We compare against LDR to demonstrate the advantage of partitioning the support set.
	\item Ellipsoids of double radius (`PWL-2'): To demonstrate the importance of the size of the ellipsoid, we present comparisons against the scheme similar to PWL, except we double the radii of the ellipsoids $\E(\bm A_j,\bm b_j)$ used in PWL.
\end{itemize}
We perform the experiment on $100$ randomly generated instances, and present the relative objective gaps in Table \ref{table:inventory_obj}. We also report the average solution times in Table \ref{table:inventory_time}. We assume that we can parallelize the task of generating the ellipsoids for each partition on $4$ machines. Since we consider $J=4$, for the solution time of the PWL approach, we choose the maximum among the solution times to find the $4$ ellipsoids, and add that to the solution time of solving the SDP \eqref{eq:dro_pldr}.

{The results indicate that we outperform the other methods in terms of the quality of the approximation. We observe that neglecting the linear term in the decision rules (\ie, using static decision rules) can lead to $75\%$ increase in the objective value. Thus, although static decision rules lead to a tractable formulation that requires less computational time, they also generate significantly worse solutions. Furthermore, not partitioning the support set can lead to $24\%$ higher objective values. Finally, doubling the radii of the bounding ellipsoids can increase the objective by $47\%$. For two-stage DRO models with random recourse, these results exhibit the importance of \begin{inparaenum}[(i)] \item using piecewise \emph{linear} instead of piecewise static decision rules, \item partitioning the support set, and \item having good ellipsoidal approximations to the partitions of the support set.\end{inparaenum} The improvement in solution quality comes at the expense of increased computational time. However, if one is willing to spend computational resources, significant improvement in the solution quality can be achieved by using our method.}

\begin{table}
	\centering
	\begin{tabular}{c|rrr}
		\hline
		Statistic       &  PWS   &  LDR   & PWL-2  \\ \hline
		Mean            & 75.1\% & 24.5\% & 47.4\% \\
		10th Percentile & 33.3\% & 1.23\% & 25.6\% \\
		90th Percentile & 130\%  & 49.4\% & 71.4\% \\
		\hline\hline
	\end{tabular}
	\caption{Inventory Management: Objective gaps of other models relative to PWL model. \label{table:inventory_obj}}
\end{table}

\begin{table}
	\centering
	\begin{tabular}{c|rrrr}
		\hline
		Statistic          & PWL & PWS  & LDR & PWL-2 \\ \hline
		Solution Time (ms) & 622 & 91.8 & 219 & 617 \\
		\hline\hline
	\end{tabular}
	\caption{Inventory Management: Average solution times of the models (in milliseconds). \label{table:inventory_time}}
\end{table}

\section{Conclusions}
\label{sec:conc}
In this article, we propose a GC reformulation for the minimum volume ellipsoid problem. We use that reformulation to generate tractable approximations when the set is defined by affine and quadratic inequalities. We prove the volume of the ellipsoids that our approach provides never exceeds the volume of $\Esmvie$. Furthermore, we demonstrate empirically that our method performs better than the other competing schemes for providing approximate solutions to the MVEP, in terms of solution time and quality. Finally, we use our method to efficiently generate high-quality approximations in the context of distributional robust optimization and linear dynamical systems.

The work presented in this paper leaves room for further investigation. First, it would be interesting to study the suboptimality bounds of the radii of the ellipsoids generated by our method. In particular, for $\Esmvie$, it is known that $\mbox{Radius}(\Esmvie) \leq K\cdot\mbox{Radius}(\Emve)$. It would be interesting to see if a better upper bound can be proved for the radius of $\Ecop$. A second possible direction is to utilize the GC reformulation to generate approximation for other types of sets. Studying such approximations would add to the entire copositive programming literature, and not only to the minimum volume ellipsoid problem. 

\subsubsection*{Acknowledgments}
%\begin{acknowledgements}
This research was supported by the National Science Foundation grant no.\ 1752125.
%\end{acknowledgements}
\newpage

\bibliographystyle{plain}
\bibliography{bibliography}

\newpage
\appendix

\section{\sproc}
\label{appendix:sproc}
In this section, we discuss the \mbox{ \sproc~\cite{ben2001lectures, boyd1994linear}}.
\begin{lemma}[\sproc]
	\label{lem:sproc}
	Let $\bm Q_i \in \Sbb^K, \bm q_i \in \RR^K, r_i \in \RR,\ i \in \{0\}\cup [I]$. Then the optimal value of the non-convex quadratic optimization problem 
	\begin{equation}
	\label{eq:sproc_nonconvex_quad}
	\begin{array}{clll}
	\minimize& \bm x^\top \bm Q_0 \bm x + 2\bm q_0^\top \bm x + r_0\\
	\subjectto & \bm x \in \RR^K,\\
	&\displaystyle\bm x^\top \bm Q_i \bm x + 2\bm q_i^\top \bm x + r_i \leq 0 \quad \forall i \in [I]
	\end{array}
	\end{equation}
	is $\geq 0$ if there exist $\lambda_i \geq 0\ \forall i \in [I]$ such that
	\begin{equation}
	\label{eq:sproc_suff_cond}
	\begin{bmatrix}\bm Q_0 & \bm q_0  \\ \bm q_0^\top & r_0 \end{bmatrix}
	+ \sum_{i \in [I]}\lambda_i \begin{bmatrix} \bm Q_i & \bm q_i  \\ \bm q_i^\top & r_i \end{bmatrix}\succeq \bm 0.
	\end{equation}
\end{lemma}

{

The \sproc has been used in literature to provide sufficient conditions that certify that the optimal value of a non-convex quadratic problem is non-negative \cite{ben2002robust, boyd1994linear, hanasusanto2018conic}. In the following remark, we discuss a special case when we only have linear inequalities in the optimization problem \eqref{eq:sproc_nonconvex_quad}.
\begin{remark}
	\label{rem:redundant_ellipsoid}
	In the case when all the constraints are linear, \ie, $\bm Q_i = \bm 0,\ i \in [I]$, the semidefinite constraint~\eqref{eq:sproc_suff_cond} reduces to
		\begin{equation*}
	\begin{bmatrix}\bm Q_0 & \bm q_0 + \sum_{i \in [I]}\lambda_i \bm q_i \\ \bm q_0^\top + \sum_{i \in [I]}\lambda_i \bm q_i  & r_0 + \sum_{i \in [I]}\lambda_i r_i \end{bmatrix} \succeq \bm 0,
	\end{equation*}
	which implies that $\bm Q_0 \succeq \bm 0$. Therefore, if $\bm Q_0$ is not positive semidefinite, then the sufficient conditions are never feasible; hence they do not provide any certification on the optimal value of \eqref{eq:sproc_nonconvex_quad}. We can overcome this limitation by adding a redundant quadratic constraint $\norm{\bm A\bm x+ \bm b}^2 \leq 1$ to the original problem \eqref{eq:sproc_nonconvex_quad}. Doing so does not change the optimal value of \eqref{eq:sproc_nonconvex_quad}, but the sufficient conditions \eqref{eq:sproc_suff_cond} can now be written as 
			\begin{equation*}
	\begin{bmatrix}\bm Q_0 & \bm q_0 + \sum_{i \in [I]}\lambda_i \bm q_i \\ \bm q_0^\top + \sum_{i \in [I]}\lambda_i \bm q_i  & r_0 + \sum_{i \in [I]}\lambda_i r_i \end{bmatrix} + \mu \begin{bmatrix}\bm A^2 & \bm A \bm b \\ \bm b^\top \bm A  & \bm b^\top \bm b \end{bmatrix} \succeq \bm 0.
	\end{equation*}
	Because of the additional variable $\mu$, the conditions become more flexible, and might be feasible even if $\bm Q_0$ fails to be positive semidefinite.
\end{remark}
}

\section{Alternative approaches to approximate \eqref{eq:MVE}}
\label{appendix:main}

\subsection{Scaled MVIE}
\label{appendix:smvie}

Consider the polytope $\Pc = \{ \bm x \in \RR^K : \bm S\bm x\leq \bm t \}$. It is known that the ellipsoid $\{ \bm B\bm u +\bm d : \bm u \in \RR^K,\norm{\bm u} \leq 1 \}$ with maximum volume that lies inside  $\Pc$ can be found by solving the optimization problem (see, \eg, \cite{boyd2004convex}):
\begin{equation}
\label{eq:mvie}
\begin{array}{cll}
\displaystyle \sup_{\bm B \in \Sbb^K,\; \bm d \in \RR^K} & \ \log\det (\bm B)\\
\st & \norm{\bm B \bm S_{j:}} + \bm S_{j:}^\top \bm d \leq t_j \quad  \forall j \in [J].
\end{array}
\end{equation}
Also, if $(\bm B,\bm d)$ is optimal to \eqref{eq:mvie}, then $\Esmvie = \{ K\bm B\bm u +\bm d : \norm{\bm u} \leq 1 \}$ contains $\Pc$. Therefore, $\Esmvie = \{ \bm B\bm u +\bm d : \norm{\bm u} \leq 1 \}$
if $\bm B$ and $\bm d$ are optimal in the following problem:
\begin{equation}
\label{eq:smvie}
\begin{array}{cll}
\displaystyle\sup_{\bm B \in \Sbb^K,\; \bm d \in \RR^K} & \ \log\det (\bm B)\\
\st &  \norm{\bm B \bm S_{j:}} + K \bm S_{j:}^\top \bm d \leq K t_j\quad  \forall j \in [J].
\end{array}
\end{equation}
The objective function provides the logarithm of $\vol(\Esmvie)$. The Lagrange dual of \eqref{eq:smvie} is given by
\begin{equation}
\label{eq:smvie_dual}
\begin{array}{cll}
\inf&\displaystyle K\bm \rho^\top \bm t - K - \log\det\left( - \frac{1}{2}\left(\bm S^\top\bm \Lambda  + \bm \Lambda^\top \bm S\right)\right)\\
\st& \bm \Lambda \in \RR^{J\times K},\; \bm \rho \in \RR^J,\\ 
&\bm S^\top \bm \rho = \bm 0,\\
& \norm{\bm \Lambda_{j:}} \leq \rho_j \quad \forall j \in [J].
\end{array}
\end{equation}
To show that strong duality holds, a Slater point can be constructed in the primal problem as follows. Consider a feasible solution to \eqref{eq:smvie} where $\bm B = \kappa \Ib$ and $\bm d$ is any point in the interior of $\Pc$. By choosing a sufficiently small $\kappa$, the inequalities in \eqref{eq:smvie} can be made strict. Therefore, the objective function of \eqref{eq:smvie_dual} is the logarithm of $\vol(\Esmvie)$.

\subsection{\sproc}
\label{appendix:sproc_mvep}
In this section, we use the \sproc to derive an approximation to \eqref{eq:MVE}. The constraint $\zab \leq 1$ can be written as 
$$\displaystyle \inf_{\bm x \in \Pc}\left\{-\bm x^\top \bm A^2 \bm x - 2\bm b^\top \bm A \bm x + 1 - \bm b^\top \bm b\right\} \geq 0.$$
Using Lemma \ref{lem:sproc} and the definition of $\Pc$  from \eqref{eq:quad_sets}, the above inequality is satisfied if there exist variables $\bm \mu \in \RR_+^J$ and $\lambda_i \geq 0\ \forall i \in [I]$ such that
$$-\begin{bmatrix} \bm A^2 & \bm A\bm b  \\ \bm b^\top\bm A & \bm b^\top \bm b - 1
\end{bmatrix} + \begin{bmatrix} \bm 0 & \frac{1}{2}\bm S^\top \bm\mu \\ \frac{1}{2}\bm \mu^\top\bm S & -\bm \mu^\top \bm t \end{bmatrix} + \displaystyle \sum_{i = 1}^I\lambda_i \begin{bmatrix} \bm Q_i^2 & \bm Q_i\bm q_i  \\
\bm q_i^\top\bm Q_i & \bm q_i^\top \bm q_i - 1
\end{bmatrix}\succeq \bm 0,$$
%This matrix inequality can be equivalently written as 
%$$ \begin{bmatrix} \bm 0 & \frac{1}{2}\bm S^\top \bm\mu \\ \frac{1}{2}\bm\mu^\top\bm S & 1-\bm \mu^\top \bm t \end{bmatrix} + \displaystyle \sum_{i = 1}^I\lambda_i \begin{bmatrix} \bm Q_i^2 & \bm Q_i\bm q_i  \\ \bm q_i^\top\bm Q_i & \bm q_i^\top \bm q_i - 1
%\end{bmatrix}\succeq \begin{bmatrix}\bm A \\ \bm b^\top \end{bmatrix} \begin{bmatrix}\bm A & \bm b \end{bmatrix},$$
which---using the Schur complement---is satisfied if and only if
\begin{equation}\label{eq:sdp_sproc}
\begin{bmatrix} \bm 0 & \frac{1}{2}\bm S^\top \bm\mu & \bm A \\
\frac{1}{2}\bm \mu^\top\bm S & 1-\bm \mu^\top \bm t & \bm b^\top\\
\bm A & \bm b & \Ib
\end{bmatrix} + \displaystyle \sum_{i = 1}^I\lambda_i \begin{bmatrix} \bm Q_i^2 & \bm Q_i\bm q_i & \bm 0 \\
\bm q_i^\top\bm Q_i & \bm q_i^\top \bm q_i - 1& \bm 0\\	\bm 0 & \bm 0 & \bm 0
\end{bmatrix}\succeq \bm 0.
\end{equation}
Hence, by replacing the constraint $\zab \leq 1$ in \eqref{eq:MVE} with a stronger constraint \eqref{eq:sdp_sproc}, we get the following conservative approximation of \eqref{eq:MVE}:
\begin{equation*}
\label{eq:sproc_formulation}
\begin{array}{clll}
\minimize &-\log\det(\bm A)\\
\subjectto&\bm A\in \Sbb^K, \bm b \in \RR^K,\; \bm \mu \in\RR_+^J,\; \lambda_i \in \RR_+\ \forall i \in [I], \\
& \eqref{eq:sdp_sproc} \textnormal{ holds.}
\end{array}
\end{equation*}

{

\subsection{The decision rule approach of \cite{zhen2017robust}}
\label{appendix:zrh}
Consider a polytope $\Pc = \{ \bm x \in \RR^K: \bm S \bm x \leq \bm t \}$, where $\bm S \in \RR^{J\times K}$ and $\bm t \in \RR^J$. In \cite[Lemma 1]{zhen2017robust}, the authors prove that
\begin{equation}
\label{eq:uncertain_quad_cons2} \sup_{\bm x \in \Pc}\norm{\bm A \bm x + \bm b} \leq 1,
\end{equation}
if there exist $\bm V\in \RR^{J\times M}$ and $\bm v\in \RR^J$ such that the following constraints hold:
\begin{equation}
\label{eq:zrh_constraints}
\begin{array}{lll}
\displaystyle  \norm{\bm V^\top \bm t + \bm b} + \bm t^\top \bm v \leq 1, \\
\displaystyle \bm A = \bm V^\top \bm S,\\
\bm S^\top \bm v = \bm 0,\\
\norm{\bm V_{j:}} \leq v_j, \forall j\in[J].
\end{array}
\end{equation}
These sufficient conditions are based on rewriting \eqref{eq:uncertain_quad_cons2} as an equivalent two-stage robust optimization problem, and then applying a linear decision rule approximation. Substituting these conditions for the constraint $Z(\bm A, \bm b) \leq 1$ in \eqref{eq:MVE} yields the following conservative approximation:
\begin{equation}
	\label{eq:zrh_approx}
	\begin{array}{clll}
		\minimize &-\log\det(\bm A)\\
		\subjectto&\bm A\in \Sbb^K,\; \bm b \in \RR^K,\; \bm V\in \RR^{J\times K},\; \bm v\in \RR^J,\\
		&\eqref{eq:zrh_constraints} \textnormal{ holds}.
	\end{array}
\end{equation}

}

\subsection{The containment approach of \cite{kellner2013containment}}
\label{appendix:ktt}

In \cite{kellner2013containment}, the authors provide the following sufficient conditions such that a set representable as a linear matrix inequality contains another such set.
\begin{theorem}[{\cite[Theorem 4.3]{kellner2013containment}}]
	\label{thm:ktt}
	Let the set $S_Y = \{ \bm x \in \RR^K : \bm Y_0 + \sum_{k\in[K]} x_k \bm Y_k \succeq \bm 0 \}$, and the set $S_Z = \{ \bm x \in \RR^K : \bm Z_0 + \sum_{k\in[K]} x_k \bm Z_k \succeq \bm 0 \}$,  where $\bm Y_k = (Y^k_{ij}) \in \Sbb^J$ and $\bm Z_k \in \Sbb^L$ for all $k \in \{ 0 \}\cup [K]$. Then $S_Y \subseteq S_Z$ if there exist matrices $\bm C_{ij}\in \RR^{L\times L}, i,j \in [J]$, such that the following constraints hold:
	\begin{equation}
		\label{eq:ktt_cons1}
		\bm C = (\bm C_{ij})_{i,j=1}^J \succeq \bm 0, \qquad 
		\bm Z_0 \succeq \sum_{i,j=1}^J Y_{ij}^0 \bm C_{ij}, \qquad 
		\bm Z_k = \sum_{i,j=1}^J Y_{ij}^k \bm C_{ij}\;\; \forall k \in [K].
	\end{equation}
\end{theorem}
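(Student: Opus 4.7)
The plan is to take an arbitrary $\bm x \in S_Y$ and verify that the pencil $\bm Z(\bm x) := \bm Z_0 + \sum_{k\in[K]} x_k \bm Z_k$ is PSD, using the three conditions in \eqref{eq:ktt_cons1}. First, I would substitute the linear equalities $\bm Z_k = \sum_{i,j} Y_{ij}^k \bm C_{ij}$ into $\bm Z(\bm x)$ and combine terms to get
\begin{equation*}
\bm Z(\bm x) \;=\; \Bigl(\bm Z_0 - \sum_{i,j=1}^J Y_{ij}^0 \bm C_{ij}\Bigr) \;+\; \sum_{i,j=1}^J Y_{ij}(\bm x)\, \bm C_{ij},
\end{equation*}
where $Y_{ij}(\bm x) = Y_{ij}^0 + \sum_k x_k Y_{ij}^k$ is the $(i,j)$ entry of the Schur complement $\bm Y(\bm x) := \bm Y_0 + \sum_k x_k \bm Y_k$. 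The first summand is PSD by the second condition in \eqref{eq:ktt_cons1}, so it remains to show that $\bm T := \sum_{i,j} Y_{ij}(\bm x) \bm C_{ij} \succeq \bm 0$ whenever $\bm Y(\bm x) \succeq \bm 0$ and the block matrix $\bm C = (\bm C_{ij}) \succeq \bm 0$. This is the crux of the argument.

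To handle the crux, I would fix an arbitrary $\bm v \in \RR^L$ and form the auxiliary $J\times J$ matrix $\bm M$ with entries $M_{ij} = \bm v^\top \bm C_{ij} \bm v$. Symmetry of $\bm M$ follows from $\bm C_{ji} = \bm C_{ij}^\top$ (a consequence of $\bm C \in \Sbb^{JL}$), and PSD-ness follows by plugging test vectors of the form $\bm w = [u_1 \bm v;\; u_2 \bm v;\; \ldots;\; u_J \bm v]\in\RR^{JL}$ into $\bm w^\top \bm C \bm w \geq 0$, which gives $\bm u^\top \bm M \bm u \geq 0$ for all $\bm u \in \RR^J$. Then
\begin{equation*}
\bm v^\top \bm T \bm v \;=\; \sum_{i,j} Y_{ij}(\bm x)\, M_{ij} \;=\; \tr\bigl(\bm Y(\bm x)\, \bm M\bigr) \;\geq\; 0,
\end{equation*}
because the trace of the product of two PSD matrices is nonnegative. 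Since $\bm v$ was arbitrary and $\bm T$ is symmetric (again using $\bm C_{ji} = \bm C_{ij}^\top$ together with the symmetry of $\bm Y(\bm x)$), we conclude $\bm T \succeq \bm 0$, and therefore $\bm Z(\bm x) \succeq \bm 0$, i.e., $\bm x \in S_Z$.

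The main obstacle is the block-PSD manipulation in the middle paragraph: one has to carefully unpack what $\bm C = (\bm C_{ij}) \succeq \bm 0$ means in terms of test vectors on $\RR^{JL}$ and realize that restricting to the Kronecker-structured test vectors $\bm e_j \otimes \bm v$ is precisely what relates $\bm C$ to the pointwise evaluation $\bm v^\top \bm C_{ij} \bm v$. Everything else is routine linear algebra: the equality-constraint substitution is mechanical, and once the fact $\langle \bm Y, \bm M\rangle \geq 0$ is in hand, the conclusion is immediate. No Slater condition, duality, or non-trivial cone theory is needed; this is an elementary sufficient-condition statement whose proof is essentially an exercise in block matrix inner products.
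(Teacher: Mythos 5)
Your proof is correct. Note, however, that the paper does not prove this statement at all: it is imported verbatim as \cite[Theorem~4.3]{kellner2013containment}, so there is no in-paper argument to compare against. Your argument is a clean, self-contained verification of the sufficiency direction: the substitution of the equality constraints $\bm Z_k = \sum_{i,j} Y^k_{ij}\bm C_{ij}$ correctly isolates the PSD residual $\bm Z_0 - \sum_{i,j}Y^0_{ij}\bm C_{ij}$ plus the term $\bm T = \sum_{i,j}Y_{ij}(\bm x)\bm C_{ij}$, and your treatment of the crux -- showing $\bm T\succeq\bm 0$ by testing the block matrix $\bm C$ against Kronecker-structured vectors $\bm u\otimes\bm v$ to produce the PSD matrix $\bm M$ and then invoking $\tr(\bm Y(\bm x)\bm M)\ge 0$ -- is exactly the standard "block Schur product" argument one would expect. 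The symmetry bookkeeping ($\bm C_{ji}=\bm C_{ij}^\top$ from $\bm C\in\Sbb^{JL}$, hence $\bm M$ and $\bm T$ symmetric) is handled properly. One cosmetic quibble: $\bm Y(\bm x)=\bm Y_0+\sum_k x_k\bm Y_k$ is the linear matrix pencil defining $S_Y$, not a ``Schur complement''; the terminology is off but nothing in the argument depends on it.
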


We summarize how we use this result to generate an approximation to $\Emve$. We are interested in finding conditions under which a polytope $\Pc :=  \{ \bm x \in \RR^K: \bm S \bm x \leq \bm t \} = \{ \bm x \in \RR^K:\Diag(\bm t - \bm S\bm x) \succeq \bm 0\}$ is contained in an ellipsoid $\E(\bm A,\bm b) =\{ \bm x \in \RR^K: \norm{\bm A \bm x + \bm b}^2 \leq \bm 1\} = \{ \bm x \in \RR^K: \bm F(\bm x) \succeq \bm 0\}$, where
\begin{equation*}\bm F(\bm x) = \begin{bmatrix} \Ib & \bm A\bm x + \bm b\\ (\bm A\bm x + \bm b)^\top & 1 \end{bmatrix} = \begin{bmatrix} \Ib & \bm b\\ \bm b^\top & 1 \end{bmatrix} + \sum_{k=1}^K x_k \begin{bmatrix} \bm 0 & \bm A_{k:}\\ \bm A_{k:}^\top & 0 \end{bmatrix}.\end{equation*}
Now, we can use Theorem \ref{thm:ktt} with $S_Y = \Pc$ and  $S_Z = \E(\bm A,\bm b)$ to generate constraints that ensure that $\E(\bm A,\bm b)$ contains $\Pc$. Since the matrices $\bm Y_0 = \Diag(\bm t)$ and $\bm Y_i = -\Diag(\bm S_i)$ are diagonal, the variables $\bm C_{jk}, j \neq k$ do not appear in the second and third constraints of \eqref{eq:ktt_cons1}. Therefore, we can eliminate these variables from the first constraint as well, by forcing $\bm C_{jj} \succeq \bm 0$. In light of this observation and by redefining $\bm C_{jj}$ as $\bm C_j$, we can rewrite the constraints \eqref{eq:ktt_cons1} as
\begin{equation}
\label{eq:ktt_cons2}
\bm C_j \in \Sbb_+^{K+1}\;\; \forall j \in [J], \qquad 
\begin{bmatrix} \Ib & \bm b\\ \bm b^\top & 1 \end{bmatrix} \succeq \sum_{j\in[J]} t_j \bm C_j, \qquad 
\begin{bmatrix} \bm 0 & \bm A_{k:}\\ \bm A_{k:}^\top & 0 \end{bmatrix} = \sum_{j\in[J]} -S_{jk} \bm C_j\;\; \forall k \in [K].
\end{equation}
Minimizing $-\log\det(\bm A)$ subject to the constraints in \eqref{eq:ktt_cons2} provides a conservative SDP approximation to \eqref{eq:MVE}. The elimination of these redundant variables leads to a tremendous increase in the solution speed.

% References here (outcomment the appropriate case)

% CASE 1: BiBTeX used to constantly update the references
%   (while the paper is being written).
%\bibliographystyle{ormsv080} % outcomment this and next line in Case 1
%\bibliography{bibliogrphy} % if more than one, comma separated

% CASE 2: BiBTeX used to generate mypaper.bbl (to be further fine tuned)
%\input{mypaper.bbl} % outcomment this line in Case 2

%If you don't use BiBTex, you can manually itemize references as shown below.

\end{document}